\newcommand{\al}{\alpha}
\newcommand{\de}{\delta}
\newcommand{\la}{\lambda}
\newcommand{\eps}{\varepsilon}
\theoremstyle{plain}
\numberwithin{equation}{section}
\newtheorem{thm}{Theorem}[section]
\newtheorem{lem}[thm]{Lemma}
\newtheorem{prop}[thm]{Proposition}
\newtheorem{cor}[thm]{Corollary}
\theoremstyle{definition}
\newtheorem{df}[thm]{Definition}
\theoremstyle{remark}
\newtheorem{remark}[thm]{Remark}
\DeclareMathOperator*{\Res}{Res}
\begin{document}

\begin{center}
{\Large\bf Regularization and inverse spectral problems \\[0.2cm] for differential operators with distribution coefficients}
\\[0.5cm]
{\bf Natalia P. Bondarenko}
\end{center}

\vspace{0.5cm}

{\bf Abstract.} In this paper, we consider a class of matrix functions, which contains regularization matrices of Mirzoev and Shkalikov for differential operators with distribution coefficients of order $n \ge 2$. We show that every matrix function of this class is associated with some differential expression. Moreover, we construct the family of associated matrices for a fixed differential expression. Furthermore, our regularization results are applied to inverse spectral theory. We study a new type of inverse spectral problems, which consist in the recovery of distribution coefficients from the spectral data independently of the associated matrix. The uniqueness theorems are proved for the inverse problems by the Weyl-Yurko matrix and by the discrete spectral data. As an example, we consider the case $n = 2$ in detail. 

\medskip

{\bf Keywords:} higher-order differential operators; distribution coefficients; regularization; inverse spectral problems; Weyl-Yurko matrix; uniqueness theorem.

\medskip

{\bf AMS Mathematics Subject Classification (2020):} 34A55 34B09 34B40 34L05 46F10
  
\vspace{1cm}

\section{Introduction} \label{sec:intro}

This paper is concerned with regularization and inverse spectral problems for differential operators generated by the expression
\begin{align} \nonumber
    \ell_n(y) = & y^{(n)} + \sum_{k = 0}^{m-1 + s} (-1)^k \bigl(\tau_{2k}(x) y^{(k)}\bigr)^{(k)} \\ \label{defln} & + \sum_{k = 0}^{m-1} (-1)^{k+1} \Bigl( \bigl( \tau_{2k+1}(x) y^{(k)}\bigr)^{(k+1)} + \bigl( \tau_{2k+1}(x) y^{(k+1)} \bigr)^{(k)}\Bigr), \quad x  \in (0, 1),
\end{align}
where $n = 2m+s$, $m \in \mathbb N$, $s \in \{ 0, 1\}$, $(\tau_{\nu})_{\nu = 0}^{n-1}$ are distributional coefficients (generalized functions),
$\tau_{\nu} \in W_{2-s}^{-i_{\nu}}[0,1]$ for $\nu = \overline{0,n-1}$, and the singularity orders $(i_{\nu})_{\nu = 0}^{n-1}$ are defined as follows:
\begin{equation} \label{defi}
i_{2k+j} := m-k-j, \quad k \ge 0, \: j \in \{0, 1\}.
\end{equation}
In other words,
\begin{equation} \label{tausig}
\tau_{\nu} = (-1)^{i_{\nu}}\sigma_{\nu}^{(i_{\nu})}, \quad
\nu = \overline{0,n-1},
\end{equation}
where $\sigma_{\nu} \in L_{2-s}[0, 1]$.

In recent years, spectral theory and related issues for linear ordinary differential operators with distribution coefficients have been rapidly developed. In 2016, Mirzoev and Shkalikov \cite{MS16} have proposed a regularization approach for even-order differential operators with distribution coefficients. In particular, their approach allows to reduce the equation $\ell_n(y) = \la y$, where $\la$ is the spectral parameter, to the equivalent first-order system
\begin{equation} \label{sysMS}
Y' = (F(x) + J) Y, \quad x \in (0,1),
\end{equation}
where $Y(x)$ is a column vector-function of size $n$, 
\begin{equation} \label{defJ}
J = \sum_{k = 1}^{n-1} E_{k,k+1} + \la E_{n,1},
\end{equation}
$E_{k,j}$ denotes the constant matrix whose entry at position $(k,j)$ equals $1$ and all the other entries equal zero, $F(x) = [f_{k,j}(x)]_{k,j = 1}^n$ is the so-called \textit{associated matrix} for the differential expression $\ell_n(y)$, $f_{k,j} = 0$ for $j > k$ and $f_{k,j} \in L_1[0,1]$ otherwise.

Analogous results were obtained for the odd-order case in \cite{MS19}.
It is worth mentioning the the reduction of differential equations with regular (integrable) coefficients to the first-order systems of form \eqref{sysMS} by introducing quasi-derivatives is well-known (see \cite{Nai68, EM99}). Weidmann \cite{Weid87} applied such reduction to a specific class of higher-order operators generated by matrix differential expressions, which
included \eqref{defln} with $i_{2k} = 1$, $i_{2k+1} = 0$.

Another regularization approach, based on quadratic forms, was developed by Neiman-Zade and Shkalikov \cite{NZS99, NZS06} for the both ordinary and partial differential equations. Relying on the ideas of \cite{NZS99}, Vladimirov found an associated matrix for a fourth-order operator in \cite{Vlad04} and obtained convenient formulas for construction of associated matrices in the general case by using the coefficients of bilinear forms in \cite{Vlad17}. Here, we focus on the bibliography for higher orders $n > 2$. For $n = 2$, four different regularization approaches  are described in \cite{SS03}.

Regularization of differential equations with distribution coefficients opened a perspective of investigating solution properties and spectral theory for such equations. Relying on the reduction to the first-order system~\eqref{sysMS}, Savchuk and Shkalikov \cite{SS20} constructed the Birkhoff-type solutions for differential equations with distribution coefficients. Konechnaya et al \cite{KM19, KMS23} applied the regularization approach to study the asymptotics of solutions for differential equations on the half-line $(0,\infty)$ as $x \to \infty$. Vladimirov et al \cite{Vlad16, VS21} investigated oscillation properties for higher-order boundary value problems with distribution coefficients.  Using the regularization methods of \cite{MS16, MS19, Vlad17}, Bondarenko \cite{Bond21, Bond22, Bond23-mmas, Bond23-results} has obtained a series of results on inverse spectral problems. Such problems consist in recovering coefficients of differential operators from spectral data. 

Inverse spectral theory has a long history. Classical results in this field were obtained for the Sturm-Liouville operators $-y'' + q(x) y$ with integrable potentials $q$ by using the famous transformation operator method (see the monographs \cite{Mar77, Lev84, FY01, Krav20} and references therein). For distributional potentials of classes $W_2^{\al}$, $\al \ge -1$, inverse problems also have been studied fairly completely (see, e.g., \cite{HM-sd, HM-2sp, FIY08, SS10, Hryn11, Eckh15, Bond21-tamkang, Bond21-mn}). However, inverse problems for higher-order ($n > 2$) differential operators are essentially different, because the transformation operator method is ineffective for them. Therefore, Yurko \cite{Yur92, Yur00, Yur02} has developed \textit{the method of spectral mappings}, which allowed him to create the inverse spectral theory for higher-order differential operators with regular coefficients. In recent years, the ideas of the method of spectral mappings were extended to operators generated by the differential expression \eqref{defln} with $\tau_{n-1} = 0$ (see \cite{Bond21, Bond22, Bond23-mmas, Bond23-results, FIY08, Bond21-tamkang}). In particular, in \cite{FIY08, Bond21-tamkang}, the method of spectral mappings has been transferred to the Sturm-Liouville operators with distribution potentials of class $W_2^{-1}[0,1]$. In \cite{Bond21}, the uniqueness theorems of inverse spectral problems have been proved for the higher-order differential operators with distribution coefficients of the Mirzoev-Shkalikov class \cite{MS16, MS19} on a finite interval. In \cite{Bond23-mmas}, differential operators on the half-line with singular coefficients of various singularity orders have been considered. For those operators, associated matrices have been constructed and the uniqueness theorems have been obtained. In \cite{Bond22}, a constructive approach to the recovery of higher-order differential operators with distribution coefficients from the spectral data has been developed. That approach allowed the author to obtain the necessary and sufficient conditions for solvability of the inverse problem for the third-order differential equation in \cite{Bond23-results}.


Spectral theory of linear differential operators has a variety of applications. The second-order Sturm-Liouville (one-dimensional Schr\"odinger) operators are widely used in mechanics, geophysics, acoustics, material science, engineering. In particular, the Sturm-Liouville operators with singular potentials of class $W_2^{-1}[0,1]$ model particle interactions in quantum mechanics \cite{Alb05}. The third order differential operators arise in the study of flows of thin viscous films over solid surfaces \cite{BP96} and in the integration of the nonlinear Boussinesq equation by the inverse scattering transform \cite{McK81}. Inverse spectral problems for the fourth-order differential operators appear in geophysics \cite{Bar74} and in vibration theory \cite{Glad05}. Some six-order eigenvalue problems that occur in mathematical models of vibrations of curved arches were considered in \cite{MZ13}. In recent years, the interest of scholars to spectral properties of the third- and the fourth-order differential operators with non-smooth and distribution coefficients has increased (see, e.g., \cite{UB20, BK21, ZAB22, ZLW23, Pol23}). Thus, the investigation of higher-order differential operators with distribution coefficients, on the one hand, is useful for the development of mathematical methods for a wider range of applied problems. On the other hand, construction of the general spectral theory for such operators is a fundamental mathematical question.

The goal of this paper is to study various associated matrices for the differential expression \eqref{defln}. As a simple example, consider the Sturm-Liouville equation
\begin{equation} \label{StL-intr}
y'' - q(x) y = \la y, \quad x \in (0,1).
\end{equation}
If $q = \sigma' \in W_2^{-1}[0,1]$, then equation \eqref{StL-intr} is equivalent to the system \eqref{sysMS} with the associated matrix $F_1(x) = \begin{bmatrix} \sigma & 0 \\ -\sigma^2 & -\sigma\end{bmatrix}$. On the other hand, if $q \in L_1[0,1]$, then the associated matrix $F_2(x) = \begin{bmatrix} 0 & 0 \\ q & 0 \end{bmatrix}$ can be used. Consequently, the following questions arise:

\begin{enumerate}
\item Are there any other associated matrices and how to describe all the possible associated matrices?
\item Does the choice of the associated matrix influence the spectral characteristics, which are used in the inverse spectral theory, and the results concerning inverse problems?
\end{enumerate}

For the second order, the answers are given in Section~\ref{sec:ex}). However, for higher orders, the situation is much more complicated. Note that the studies of Mirzoev and Shkalikov \cite{MS16, MS19} provide only a specific construction of the associated matrices and do not answer these questions. In the papers \cite{Bond21, Bond22, Bond23-mmas, Bond23-results}, inverse spectral problems are investigated also by using specific forms of associated matrices.

In this paper, we describe the family of all the matrix functions $F(x)$ associated with the differential expression $\ell_n(y)$ of form \eqref{defln} in a certain natural class $\mathfrak F_n$, which is defined in Sections~\ref{sec:even} and~\ref{sec:odd} for even and odd $n$, respectively. Moreover, we prove that every matrix $F(x)$ of $\mathfrak F_n$ is associated with some differential expression $\ell_n(y)$ (see Theorem~\ref{thm:invF}). Furthermore, we show that any two matrices $F(x)$ and $\tilde F(x)$ associated with the same differential expression $\ell_n(y)$ generate equal domains $\mathcal D_F$ and $\mathcal D_{\tilde F}$ for solutions of the equation $\ell_n(y) = \la y$ (see Theorem~\ref{thm:dom}).

Next, we apply the obtained regularization results to inverse spectral problems. We study the influence of the associated matrix choice on the spectral data. As the main spectral characteristics, we use the Weyl-Yurko matrix $M(\la)$, which was introduced by Yurko \cite{Yur92, Yur00, Yur02} and used by Bondarenko \cite{Bond21, Bond22, Bond23-mmas, Bond23-results} for the case of distribution coefficients. The Weyl-Yurko matrix is closely related to several spectra (see \cite{Bond21} for details). In addition, we consider the discrete spectral data which consists of the Weyl-Yurko matrix poles $\Lambda$ and of the so-called weight matrices $\mathcal N(\la_0)$, $\la_0 \in \Lambda$, which are obtained from the Laurent series of $M(\la)$. It is convenient to use the discrete data $\{ \la_0, \mathcal N(\la_0) \}_{\la_0 \in \Lambda}$ for constructive solution of the inverse problem (see \cite{Bond22}). We show that the Weyl-Yurko matrix, in general, depends on the choice of the associated matrix $F(x)$. Namely, if $M(\la)$ and $\tilde M(\la)$ are the Weyl-Yurko matrices that are obtained from different regularizations of the same differential expression $\ell_n(y)$, then $M(\la) = L \tilde M(\la)$, where $L$ is a constant lower-triangular matrix (see Theorem~\ref{thm:L}). The discrete spectral data, on the contrary, do not depend on the associated matrix. Moreover, we prove Theorems~\ref{thm:uniq} and~\ref{thm:sd} on the uniqueness of recovering the coefficients $(\tau_{\nu})_{\nu = 0}^{n-2}$ from the Weyl-Yurko matrix given up to a lower-triangular matrix factor $L$ and from the spectral data $\{ \la_0, \mathcal N(\la_0) \}_{\la_0 \in \Lambda}$, respectively. We emphasize that the considered inverse spectral problems are fundamentally novel comparing with the ones of \cite{Bond21, Bond22, Bond23-mmas}. The results on inverse problems in this paper are independent of the choice of the associated matrix, while in the previous studies, specific associated matrices were considered and the antiderivatives $(\sigma_{\nu})_{\nu = 0}^{n-2}$ were reconstructed. These two types of inverse problems are different and they both generalize the classical inverse problems for differential operators with regular coefficients.

It is worth noting that the regularization methods in \cite{MS16, MS19, Vlad17} were developed for differential expressions of more general forms with locally integrable or locally square integrable antiderivatives $\sigma_{\nu}$ and with non-trivial functional coefficients at $y^{(n)}$. However, in the inverse problem theory, it is natural to consider the form \eqref{defln} with the coefficient $1$ at $y^{(n)}$ and the zero coefficient at $y^{(n-1)}$, following the previous studies \cite{Yur00, Yur02}. The integrability of the antiderivatives $\sigma_{\nu}$ on the whole interval $[0,1]$ is crucial for the asymptotics of the Birkhoff-type solutions, which are important for investigation of the inverse problems. Therefore, in this paper, we confine ourselves to differential expressions of form \eqref{defln} satisfying \eqref{tausig} with $\sigma_{\nu} \in L_{2-s}[0,1]$. However, our regularization results can be transferred to the case of $L_{2-s,loc}(0,1)$ with necessary technical modifications.

The paper is organized as follows. Section~\ref{sec:even} contains the main regularization results together with their proofs for the even-order case. The odd-order case is considered in Section~\ref{sec:odd}. Section~\ref{sec:ip} is concerned with inverse spectral problems. In Section~\ref{sec:ex}, the main results are illustrated by the example of $n = 2$. Section~\ref{sec:concl} contains a brief summary of the results and concluding remarks.

\section{Even order} \label{sec:even}

Let $n = 2m$, $m \in \mathbb N$. Consider the differential expression \eqref{defln} for $s = 0$ with the complex-valued distributional coefficients
$\mathcal T := (\tau_{\nu})_{\nu = 0}^{n-1}$, which belong to the space
$$
\mathfrak T_n := \bigl\{ \mathcal T = (\tau_{\nu})_{\nu = 0}^{n-1} \colon \tau_{\nu} \in W_2^{-i_{\nu}}[0,1], \, \nu = \overline{0,n-1} \bigr\},
$$
where the singularity orders $(i_{\nu})_{\nu = 0}^{n-1}$ are defined by \eqref{defi}.
We will write that $\Sigma = (\sigma_{\nu})_{\nu = 0}^{n-2} = \Sigma(\mathcal T)$ if the relations \eqref{tausig} hold. Note that the antiderivaties $\Sigma$ are not uniquely determined by $\mathcal T$. Anyway, the arguments below are valid for any possible choice of $\Sigma$.

\subsection{Regularization of Mirzoev and Shkalikov} \label{sub:MS}

In this subsection, we provide the construction of the associated matrix of Mirzoev and Shkalikov \cite{MS16} for the even-order differential expression $\ell_n(y)$ by the method of \cite{Vlad17, Bond23-mmas}. 

Denote by $\mathfrak D = C_0^{\infty}(0,1)$ and $\mathfrak D'$ the spaces of test functions and generalized functions, respectively. In other words, $\mathfrak D$ is the space of infinitely differentiable functions $f$ with $\mbox{supp} f \subset (0,1)$ and $\mathfrak D'$ is the space of continuous linear functionals on $\mathfrak D$. For $f \in \mathfrak D'$ and $z \in \mathfrak D$, we use the notation $(f,z) = fz$. In particular, $(f,z) = \int_0^1 f(x) z(x) \, dx$ if $f \in L_{1,loc}(0,1)$.

By direct calculations, one can easily prove the following proposition (see, e.g., \cite{Bond23-mmas}).

\begin{prop} \label{prop:Q}
For $y \in W_2^m[0,1]$, we have $\ell_n(y) \in \mathfrak D'$ and the following relation holds:
\begin{equation} \label{quad}
(\ell_n(y), z) = (-1)^m ( y^{(m)}, z^{(m)} ) + \sum_{r,j = 0}^m (q_{r,j} y^{(r)}, z^{(j)}), \quad z \in \mathfrak D,
\end{equation}
where $\Sigma = \Sigma(\mathcal T)$,
\begin{gather} \label{defQ}
[q_{r,j}]_{r,j = 0}^m = \mathscr Q_n(\Sigma) := \sum_{\nu = 0}^{n-1} \sigma_{\nu}(x) \chi_{\nu, i_{\nu}}, \\ \label{defchi}
\chi_{\nu, i} = [\chi_{\nu,i;r,j}]_{r,j = 0}^m, \quad
\begin{array}{rl}
\chi_{2k, i; s + k, i - s + k} = & C_i^s, \quad s = \overline{0, i}, \\
\chi_{2k+1, i; s+k, i + 1- s + k} = & C_{i + 1}^s - 2 C_i^{s-1}, \quad s = \overline{0, i+1}, 
\end{array}
\end{gather}
all the other entries $\chi_{\nu, i; r,j}$ equal zero, and $C_i^s = \frac{i!}{s!(i-s)!}$ are the binomial coefficients, $C_i^{-1} := 0$.
\end{prop}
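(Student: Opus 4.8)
The plan is to evaluate $(\ell_n(y), z)$ for an arbitrary test function $z \in \mathfrak D$ by repeatedly integrating by parts, moving every derivative that falls on a coefficient onto the smooth compactly supported $z$, and then reading off the entries of $\mathscr Q_n(\Sigma)$ from the result. The first point to settle is that $\ell_n(y) \in \mathfrak D'$ at all: each quasi-product $\tau_\nu y^{(p)}$ occurring in \eqref{defln} must be given meaning as a distribution, which I would do by setting $(\tau_\nu y^{(p)}, \phi) := (\tau_\nu, y^{(p)} \phi)$. Using $\tau_\nu = (-1)^{i_\nu}\sigma_\nu^{(i_\nu)}$ from \eqref{tausig}, this equals $(\sigma_\nu, (y^{(p)}\phi)^{(i_\nu)})$ up to sign, and it is finite because in every product appearing in \eqref{defln} one has $i_\nu + p \le m$ (by \eqref{defi}), so that $(y^{(p)}\phi)^{(i_\nu)}$ involves $y$ differentiated at most $m$ times and hence lies in $L_2$, pairing against $\sigma_\nu \in L_2$. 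With well-definedness in hand, the leading term is immediate: $(y^{(n)}, z) = ((y^{(m)})^{(m)}, z) = (-1)^m (y^{(m)}, z^{(m)})$, which is precisely the first summand of \eqref{quad}.

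For the even-order contributions $(-1)^k (\tau_{2k}y^{(k)})^{(k)}$, I would integrate the outer $k$ derivatives by parts to reach $(\tau_{2k}y^{(k)}, z^{(k)})$, then unfold the product and transfer the $i_{2k} = m-k$ derivatives carried by $\sigma_{2k}$ onto $y^{(k)}z^{(k)}$, obtaining $(\sigma_{2k}, (y^{(k)}z^{(k)})^{(m-k)})$. The Leibniz rule expands this into $\sum_{l=0}^{m-k} C_{m-k}^l\,(\sigma_{2k}y^{(k+l)}, z^{(m-l)})$, and the substitution $r = k+l$, $j = m-l$, $s = l$ matches it term by term against the prescription $\chi_{2k,i;s+k,i-s+k} = C_i^s$ of \eqref{defchi} with $i = m-k$. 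This recovers the even rows of $\mathscr Q_n(\Sigma)$.

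The odd-order terms are where the real work lies. After integrating by parts, each summand $(-1)^{k+1}\bigl((\tau_{2k+1}y^{(k)})^{(k+1)} + (\tau_{2k+1}y^{(k+1)})^{(k)}\bigr)$ collapses, once the signs are tracked, into the difference $(\tau_{2k+1}y^{(k)}, z^{(k+1)}) - (\tau_{2k+1}y^{(k+1)}, z^{(k)})$. Writing $i = i_{2k+1} = m-k-1$ and applying the same unfold-and-Leibniz step to each pairing produces $\sum_{l=0}^{i} C_i^l\,(\sigma_{2k+1}y^{(k+l)}, z^{(k+1+i-l)})$ from the first and $\sum_{l=0}^{i} C_i^l\,(\sigma_{2k+1}y^{(k+1+l)}, z^{(k+i-l)})$ from the second. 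Both families lie on the single antidiagonal $r+j = 2k+1+i$, and collecting them by $s := r-k$ gives the coefficient $C_i^s - C_i^{s-1}$ at each interior index, together with the isolated values $+1$ at $s = 0$ and $-1$ at $s = i+1$. The main obstacle is exactly this combinatorial merge; I would resolve it with Pascal's identity $C_{i+1}^s = C_i^s + C_i^{s-1}$, which rewrites $C_i^s - C_i^{s-1}$ as $C_{i+1}^s - 2C_i^{s-1}$, precisely the entry $\chi_{2k+1,i;s+k,i+1-s+k}$ from \eqref{defchi}, with the convention $C_i^{-1} = 0$ absorbing the two boundary cases uniformly. Summing the even and odd contributions over $\nu$ assembles $\mathscr Q_n(\Sigma) = \sum_{\nu=0}^{n-1}\sigma_\nu\chi_{\nu,i_\nu}$ and yields \eqref{quad}; consistent sign-bookkeeping throughout is the only genuine pitfall.
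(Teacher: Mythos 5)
Your proof is correct and is precisely the ``direct calculation'' the paper invokes for Proposition~\ref{prop:Q} (the paper gives no inline proof, deferring to \cite{Bond23-mmas}): integrate the outer derivatives by parts onto $z$, unfold $\tau_\nu = (-1)^{i_\nu}\sigma_\nu^{(i_\nu)}$ onto the products via Leibniz, and collect antidiagonal coefficients. Your sign bookkeeping, the well-definedness argument via $i_\nu + p \le m$, and the Pascal-identity merge $C_i^s - C_i^{s-1} = C_{i+1}^s - 2C_i^{s-1}$ for the odd terms all check out against \eqref{defchi}.
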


Using the entries of the matrix function $Q(x) = [q_{r,j}]_{r,j = 0}^m$ defined by \eqref{defQ}, construct the matrix function $F(x) = [f_{k,j}]_{k,j =1}^n$ by the rule $F = \mathscr S_n(Q)$ given by the formulas
\begin{equation} \label{Seven}
\begin{cases}
    f_{m,j} := (-1)^{m+1} q_{j-1,m}, \quad j = \overline{1, m}, \\
    f_{k,m+1} := (-1)^{k+1} q_{m,2m-k}, \quad k = \overline{m+1, 2m}, \\
    f_{k,j} := (-1)^{k+1} q_{j-1,2m-k} + (-1)^{m+k} q_{j-1,m} q_{m,2m-k}, \quad k = \overline{m+1,2m}, \, j = \overline{1,m}, \\
    f_{k,j} := 0, \quad k < m \:\: \text{or} \:\: j > m+1 \:\: \text{or} \:\: (k,j) =(m,m+1). 
    \end{cases}
\end{equation}

Obviously, $f_{k,j} \in L_1[0,1]$, $k,j = \overline{1,n}$. Define the quasi-derivatives
\begin{equation} \label{quasi}
y^{[0]} := y, \quad y^{[k]} = (y^{[k-1]})' - \sum_{j = 1}^k f_{k,j} y^{[j-1]}, \quad k = \overline{1,n},
\end{equation}
and the domain
\begin{equation} \label{defDF}
\mathcal D_F = \{ y \colon y^{[k]} \in W_1^1[0,1], \, k = \overline{0, n-1} \} \subset W_2^m[0,1].
\end{equation}

The results of \cite{MS16} imply the following proposition on the regularization of the differential expression $\ell_n(y)$ of even order. 

\begin{prop} \label{prop:reg}
For any $y \in \mathcal D_F$, the function $\ell_n(y)$ is regular and $\ell_n(y) = y^{[n]}$.
\end{prop}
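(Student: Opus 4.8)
The plan is to prove the identity at the level of distributions: I will show that $(\ell_n(y), z) = \int_0^1 y^{[n]}(x) z(x)\, dx$ for every $z \in \mathfrak D$. Since $y \in \mathcal D_F$ forces $y^{[n-1]} \in W_1^1[0,1]$, the recursion \eqref{quasi} gives $y^{[n]} = (y^{[n-1]})' - \sum_{j=1}^n f_{n,j} y^{[j-1]} \in L_1[0,1]$; hence, once the identity is established, the functional $\ell_n(y)$ coincides with a regular $L_1$-function, which is exactly the assertion. By Proposition~\ref{prop:Q}, the left-hand side equals the quadratic form \eqref{quad}, so the whole proposition reduces to the purely computational identity
\begin{equation*}
\int_0^1 y^{[n]} z\, dx = (-1)^m (y^{(m)}, z^{(m)}) + \sum_{r,j=0}^m (q_{r,j} y^{(r)}, z^{(j)}), \qquad z \in \mathfrak D.
\end{equation*}

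First I would record the explicit form of the quasi-derivatives dictated by the block structure of $F = \mathscr S_n(Q)$. Because $f_{k,j} = 0$ whenever $k < m$, the recursion \eqref{quasi} yields $y^{[k]} = y^{(k)}$ for $k = \overline{0, m-1}$, and then, reading off $f_{m,j} = (-1)^{m+1} q_{j-1,m}$,
\begin{equation*}
y^{[m]} = y^{(m)} + (-1)^m \sum_{r=0}^{m-1} q_{r,m}\, y^{(r)}.
\end{equation*}
The term $y^{(m)}$ alone need not be differentiable, and it is precisely the membership $y \in \mathcal D_F$ that guarantees $y^{[m]} \in W_1^1[0,1]$: the quasi-derivative bundles $y^{(m)}$ together with the last column $q_{\cdot, m}$ of $Q$ so as to absorb the singular part of the coefficients. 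This is what makes the subsequent integrations by parts legitimate.

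Next I would evaluate $\int_0^1 y^{[n]} z\, dx$ by iterating the rewriting $(y^{[k-1]})' = y^{[k]} + \sum_{j=1}^k f_{k,j} y^{[j-1]}$ and integrating by parts, all boundary terms vanishing since $z \in C_0^\infty(0,1)$. The superdiagonal chain $\int y^{[n]} z = -\int y^{[n-1]} z' = \int y^{[n-2]} z'' = \cdots$ descends from $k = n$ down to $k = m$ and terminates in $(-1)^m \int y^{[m]} z^{(m)}$; expanding $y^{[m]}$ by the displayed formula reproduces the leading term $(-1)^m(y^{(m)}, z^{(m)})$ together with the column terms $\sum_{r = 0}^{m-1} (q_{r,m} y^{(r)}, z^{(m)})$ of the quadratic form. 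At each step $k = \overline{m+1, n}$ of the descent, an off-diagonal contribution of the form $\sum_{j} f_{k,j} y^{[j-1]} z^{(n-k)}$ (with a sign fixed by $n-k$) is peeled off; since $f_{k,j}$ is supported on $j = \overline{1, m+1}$, every $y^{[j-1]}$ occurring here is either an ordinary derivative $y^{(j-1)}$ (for $j \le m$) or the combination $y^{[m]}$ (for $j = m+1$), while the $z$-derivative order $n-k$ never exceeds $m-1$. Collecting these contributions is meant to produce the remaining terms $(q_{r,j} y^{(r)}, z^{(j)})$ with $j \le m-1$.

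The main obstacle is the bookkeeping of the terms with $j = m+1$, in which $y^{[m]}$ re-enters the computation. Differentiating $y^{[k-1]}$ for $k \ge m+1$ feeds the entire combination $y^{[m]}$ --- which already carries the last column $q_{\cdot, m}$ --- back into the sum, and expanding it creates cross terms proportional to $q_{r,m}\, q_{m, 2m-k}$. The nonlinear products $(-1)^{m+k} q_{j-1,m} q_{m, 2m-k}$ built into the bottom-left block of $\mathscr S_n$ in \eqref{Seven} are engineered precisely to cancel this feedback, so that the net coefficient of each $(y^{(r)}, z^{(j)})$ collapses to $q_{r,j}$ and no spurious higher-order contribution survives. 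I expect this cancellation to be the delicate point, and I would organize it as an induction on $k$ from $m$ to $n$, maintaining an explicit representation of $y^{[k]}$ as $y^{(k-m)}$ plus an $L_1$-combination of $y^{(0)}, \dots, y^{(m-1)}$ and $y^{[m]}$; tracking the interaction of this feedback with the last row $q_{m, \cdot}$ of $Q$ then reduces the claim to a finite identity among the binomial entries of the matrices $\chi_{\nu, i}$. Granting Proposition~\ref{prop:Q}, this verification is elementary though intricate; alternatively, since $\mathscr S_n(Q)$ is exactly the associated matrix of Mirzoev and Shkalikov, one may invoke \cite{MS16} directly.
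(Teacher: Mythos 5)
Your proposal is correct and follows essentially the paper's own route: the paper obtains Proposition~\ref{prop:reg} by combining the quadratic-form representation of Proposition~\ref{prop:Q} with the weak identity \eqref{relFQ} of Proposition~\ref{prop:relFQ} (the latter cited from \cite{Vlad17, Bond23-mmas}, equivalently \cite{MS16}), which is exactly your reduction, including the observation that $y^{[n]} \in L_1[0,1]$ yields regularity. Your sketch of \eqref{relFQ} identifies the right mechanism --- the descent by integration by parts against $z \in \mathfrak D$, with the products $(-1)^{m+k} q_{j-1,m} q_{m,2m-k}$ in \eqref{Seven} cancelling the feedback of $y^{[m]}$ --- and in fact this cancellation closes within each single step $k = \overline{m+1,n}$, giving $y^{[k]} = (y^{[k-1]})' + (-1)^k \sum_{r=0}^m q_{r,2m-k}\, y^{(r)}$, so the induction bookkeeping you anticipate is lighter than you fear (only note that your parenthetical pointwise representation of $y^{[k]}$ via ``$y^{(k-m)}$'' is not meaningful for $k > m$, since $y^{(k)}$ need not exist; the argument must stay in the weak form you actually use).
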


Note that the matrix function $F(x)$ constructed by the formulas \eqref{Seven} coincide with the associated matrix of \cite{MS16}. In order to obtain $F(x)$, we first represent $\ell_n(y)$ in terms of the bilinear form \eqref{quad} with the matrix $Q(x)$ and then use $Q(x)$ to find $F(x)$. The formulas \eqref{Seven} for constructing $F(x)$ by using $Q(x)$ have been obtained in \cite{Vlad17}. For our purposes, it is convenient to use the bilinear form with the matrix $Q(x)$ as an intermediate step.

\subsection{Class $\mathfrak F_n$} \label{sub:Fn}

In this subsection, we define the class $\mathfrak F_n$, which is the class of matrix functions $F(x)$ associated with differential expressions of form \eqref{defln}, as it will be shown in Subsection~\ref{sub:main}. Here, we study the properties of the quasi-derivatives and of the domain $\mathcal D_F$ generated by $F \in \mathfrak F_n$.

Define the spaces of matrix functions
\begin{align*}
\mathfrak Q_n := \bigl\{ Q(x) = [q_{r,j}]_{r,j = 0}^m \colon & q_{r,j} \in L_1[0,1], \, r,j = \overline{0, m-1}, \\ & q_{r,m}, q_{m,r} \in L_2[0,1], \, r = \overline{0,m-1}, \, q_{m,m} = 0 \bigr\}, \\
\mathfrak F_n := \bigl\{ F(x) = [f_{k,j}]_{k,j = 1}^n \colon & f_{k,j} \in L_1[0,1], \, f_{m,j}, \, f_{k,m+1} \in L_2[0,1], \, k = \overline{m+1, n}, \, j = \overline{1,m}, \\ & f_{k,j} = 0, \, k < m \:\: \text{or} \:\: j > m+1 \:\: \text{or} \:\: (k,j) =(m,m+1) \bigr\}.
\end{align*}

The structure of the spaces $\mathfrak Q_n$ and $\mathfrak F_n$ can be symbolically presented as follows:
\begin{align*}
n = 2 \colon & \quad 
Q = \begin{bmatrix}
L_1 & L_2 \\
L_2 & 0
\end{bmatrix}, \quad
F = \begin{bmatrix}
        L_2 & 0 \\
        L_1 & L_2
    \end{bmatrix}, \\
n = 4 \colon & \quad
Q = \begin{bmatrix}
        L_1 & L_1 & L_2 \\
        L_1 & L_1 & L_2 \\
        L_2 & L_2 & 0
    \end{bmatrix}, \quad
F = \begin{bmatrix}
        0 & 0 & 0 & 0 \\
        L_2 & L_2 & 0 & 0 \\
        L_1 & L_1 & L_2 & 0 \\
        L_1 & L_1 & L_2 & 0 
    \end{bmatrix}, \\
n = 6 \colon & \quad
Q = \begin{bmatrix}
        L_1 & L_1 & L_1 & L_2 \\
        L_1 & L_1 & L_1 & L_2 \\
        L_1 & L_1 & L_1 & L_2 \\
        L_2 & L_2 & L_2 & 0
    \end{bmatrix}, \quad
F = \begin{bmatrix}
        0 & 0 & 0 & 0 & 0 & 0 \\
        0 & 0 & 0 & 0 & 0 & 0 \\
        L_2 & L_2 & L_2 & 0 & 0 & 0\\
        L_1 & L_1 & L_1 & L_2 & 0 & 0\\
        L_1 & L_1 & L_1 & L_2 & 0 & 0\\
        L_1 & L_1 & L_1 & L_2 & 0 & 0        
    \end{bmatrix}.
\end{align*}

Clearly, in the Mirzoev-Shkalikov regularization described in Subsection~\ref{sub:MS}, $Q \in \mathfrak Q_n$ and $F \in \mathfrak F_n$. Furthermore, one can easily check that the mapping $\mathscr S_n \colon \mathfrak Q_n \to \mathfrak F_n$ defined by the formulas \eqref{Seven} is a bijection. The inverse mapping $Q = \mathscr S_n^{-1}(F)$ is given by the formulas
$$
\begin{cases}
    q_{j-1,m} := (-1)^{m+1} f_{m,j}, \quad j = \overline{1, m}, \\
    q_{m,2m-k} := (-1)^{k+1} f_{k,m+1}, \quad k = \overline{m+1, 2m}, \\
    q_{j-1,2m-k} := (-1)^{k+1} (f_{k,j} - f_{k,m+1} f_{m,j}), \quad k = \overline{m+1,2m}, \, j = \overline{1,m}, \\
    q_{m,m} := 0.
\end{cases}
$$

For each fixed $F \in \mathfrak F_n$, we can define the quasi-derivatives $y^{[k]}$ by \eqref{quasi} and the domain $\mathcal D_F$ by \eqref{defDF}. Let us study some of their properties.

\begin{prop} \label{prop:relFQ}
Let $Q(x)$ be a matrix function of $\mathfrak Q_n$ and $F := \mathscr S_n(Q)$. 
Then, for any $y \in \mathcal D_F$, the following relation holds:
\begin{equation} \label{relFQ}
(y^{[n]}, z) = (-1)^m ( y^{(m)}, z^{(m)} ) + \sum_{r,j = 0}^m (q_{r,j} y^{(r)}, z^{(j)}), \quad z \in \mathfrak D.
\end{equation}
\end{prop}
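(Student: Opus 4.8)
The plan is to compute the left-hand side $(y^{[n]},z)$ directly from the definition \eqref{quasi} of the quasi-derivatives attached to $F=\mathscr S_n(Q)$, and to transform it into the bilinear form on the right by integrating by parts, using that $z\in\mathfrak D$ and all its derivatives vanish at the endpoints. A reduction to Propositions~\ref{prop:Q} and~\ref{prop:reg} is not available, since those results apply only when $Q$ lies in the image of $\mathscr Q_n$; a glance at \eqref{defchi} shows that $\mathscr Q_n(\Sigma)$ is supported on the anti-diagonals $r+j\ge m$, so it is a proper subclass of $\mathfrak Q_n$ (e.g. $q_{0,0}$ may be an arbitrary $L_1$ function but is unreachable). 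Hence \eqref{relFQ} must be checked for an arbitrary $Q\in\mathfrak Q_n$ by hand.

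First I would determine the quasi-derivatives explicitly. Because $f_{k,j}=0$ for $k<m$, the recursion \eqref{quasi} yields $y^{[k]}=y^{(k)}$ for $k=\overline{0,m-1}$; then, using $f_{m,j}=(-1)^{m+1}q_{j-1,m}$ together with $q_{m,m}=0$, one gets $y^{[m]}=y^{(m)}+(-1)^m g_m$, where $g_p:=\sum_{r=0}^m q_{r,p}\,y^{(r)}$. The decisive step is the recursion for $k=\overline{m+1,n}$: writing $y^{[k]}=(y^{[k-1]})'-\sum_{j=1}^m f_{k,j}y^{(j-1)}-f_{k,m+1}y^{[m]}$ and inserting the formulas \eqref{Seven}, the bilinear contribution $(-1)^{m+k}q_{j-1,m}q_{m,2m-k}$ arising from the $f_{k,j}$-terms cancels exactly against the term produced by $(-1)^m f_{k,m+1}g_m$ inside $f_{k,m+1}y^{[m]}$, since $f_{k,m+1}=(-1)^{k+1}q_{m,2m-k}$ and the two signs are opposite. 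This is the main obstacle: one must verify that the signs and coefficients match so that all product terms disappear, leaving the clean linear recursion $y^{[k]}=(y^{[k-1]})'+(-1)^k g_{2m-k}$. It is precisely here that the nonlinear correction in the definition of $\mathscr S_n$ is used.

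With these facts I would finish by integration by parts, staying at the level of honest pairings so as never to differentiate the merely integrable coefficients $q_{r,j}$. Set $I_k:=(y^{[k]},z^{(n-k)})$, so $(y^{[n]},z)=I_n$. For $k\ge m+1$ the recursion and one integration by parts give $I_k=-I_{k-1}+(-1)^k(g_{2m-k},z^{(n-k)})$, the boundary term vanishing because $z^{(n-k)}$ is compactly supported in $(0,1)$ while $y^{[k-1]}\in W_1^1[0,1]$. Telescoping from $k=n$ down to $k=m$ and using $n=2m$, so that the accumulated signs collapse to $(-1)^n=1$, I obtain $I_n=(-1)^m I_m+\sum_{p=0}^{m-1}(g_p,z^{(p)})$. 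Finally, substituting $I_m=(y^{(m)},z^{(m)})+(-1)^m(g_m,z^{(m)})$ gives $(y^{[n]},z)=(-1)^m(y^{(m)},z^{(m)})+\sum_{p=0}^m(g_p,z^{(p)})$, and expanding $g_p=\sum_{r=0}^m q_{r,p}y^{(r)}$ produces exactly the double sum $\sum_{r,j=0}^m(q_{r,j}y^{(r)},z^{(j)})$ of \eqref{relFQ}.
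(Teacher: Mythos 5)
Your proof is correct, and it is worth noting that it supplies an argument the paper itself does not spell out: the paper disposes of Proposition~\ref{prop:relFQ} by citing the general construction of Vladimirov \cite{Vlad17} and the proof in \cite{Bond23-mmas}, whereas you give a self-contained direct verification. Your preliminary observation is also accurate and pertinent: since $\chi_{\nu,i}$ lives on the diagonal $r+j=\nu+i$ and $i_{2k}=m-k$, $i_{2k+1}=m-k-1$ force $\nu+i_{\nu}=m+k\ge m$, the Mirzoev--Shkalikov matrices $\mathscr Q_n(\Sigma)$ occupy only the diagonals $r+j\ge m$, so Propositions~\ref{prop:Q} and~\ref{prop:reg} indeed cannot cover an arbitrary $Q\in\mathfrak Q_n$ (and the paper's own logic runs the other way: Theorem~\ref{thm:invF} is deduced \emph{from} Proposition~\ref{prop:relFQ} via Lemma~\ref{lem:findtau}, so a reduction in your direction would be circular). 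I checked the computational core: with $g_p=\sum_{r=0}^m q_{r,p}y^{(r)}$ one gets $y^{[k]}=y^{(k)}$ for $k\le m-1$, $y^{[m]}=y^{(m)}+(-1)^m g_m$ (using $q_{m,m}=0$ to close the sum at $r=m$), and for $k=\overline{m+1,n}$ the quadratic pieces do cancel exactly as you claim: the recursion contributes $-(-1)^{m+k}q_{m,2m-k}g_m$ from the $f_{k,j}$-products and $-(-1)^{k+1}q_{m,2m-k}\cdot(-1)^m g_m=(-1)^{m+k}q_{m,2m-k}g_m$ from $f_{k,m+1}y^{[m]}$, leaving $y^{[k]}=(y^{[k-1]})'+(-1)^k g_{2m-k}$. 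The telescoping of $I_k=(y^{[k]},z^{(n-k)})$ with the sign collapse $(-1)^{n-l}(-1)^l=1$ (valid since $n=2m$ is even, which is the standing assumption of Section~\ref{sec:even}) then yields \eqref{relFQ}; the integrations by parts are legitimate because $y^{[k-1]}\in W_1^1[0,1]$ for $y\in\mathcal D_F$ and $z\in\mathfrak D$ kills all boundary terms, and all pairings are honest $L_1$-pairings since $q_{r,j}y^{(r)}\in L_1[0,1]$ ($y^{(r)}$ bounded for $r<m$, and $q_{m,j},y^{(m)}\in L_2$ for the edge terms). What your route buys is transparency: the proof exposes exactly where the quadratic correction in \eqref{Seven} is needed, at the cost of a computation that the paper outsources to the references.
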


Proposition~\ref{prop:relFQ} follows from the general construction of Vladimirov~\cite{Vlad17}. The proof can be found in \cite{Bond23-mmas}. In particular, if the matrix function $Q(x)$ is constructed by \eqref{defQ}, then Propositions~\ref{prop:Q} and~\ref{prop:relFQ} together imply Proposition~\ref{prop:reg}.

Note that the domain $\mathcal D_F$ is a Banach space with the norm
$$
\| y \|_{\mathcal D_F} := \sum_{k = 0}^{n-1} \| y^{[k]} \|_{W_1^1[0,1]}.
$$
Consider the norms\footnote{Our norm in $W_p^s[0,1]$ differs from the standard one 
$\| y \|_{W_p^s[0,1]} = \left( \sum_{k = 0}^s \| y^{(k)} \|_{L_p[0,1]}^p \right)^{1/p}$. However, this does not influence the results, 
since these norms are equivalent.}
$$
\| y \|_{W_p^s[0,1]} = \sum_{k = 0}^s \| y^{(k)} \|_{L_p[0,1]}, \quad \| y \|_{L_p[0,1]} = \left( \int_0^1 |y(x)|^p \, dx \right)^{1/p}.
$$

\begin{lem} \label{lem:embed}
For every $F \in \mathfrak F_n$, the Banach space $\mathcal D_F$ is continuously and densely embedded in $W_2^m[0,1]$.
\end{lem}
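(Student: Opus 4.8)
The plan is to prove the two claims — continuous embedding and density — separately, with the continuous embedding being the more computational part and density being the more delicate one.

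\medskip

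\textbf{Continuous embedding.} First I would establish the algebraic inclusion $\mathcal D_F \subset W_2^m[0,1]$ together with the norm estimate
$$
\| y \|_{W_2^m[0,1]} \le C \| y \|_{\mathcal D_F}, \qquad y \in \mathcal D_F,
$$
for some constant $C = C(F)$. The idea is to invert the defining recursion \eqref{quasi}. Rewriting \eqref{quasi} as $(y^{[k-1]})' = y^{[k]} + \sum_{j=1}^k f_{k,j} y^{[j-1]}$, one can express each ordinary derivative $y^{(k)}$ as a finite linear combination of the quasi-derivatives $y^{[0]}, \dots, y^{[k]}$ with coefficients that are polynomial expressions in the entries $f_{k,j}$ and their products. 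I would argue by induction on $k$ that $y^{(k)} = y^{[k]} + \sum_{i=0}^{k-1} g_{k,i} y^{[i]}$, where each $g_{k,i}$ is a finite sum of products of the $f_{k,j}$'s. The point is that for $k \le m$ the relevant coefficients $f_{k,j}$ live in $L_2[0,1]$ (for the row $k=m$ and the lower rows involved), so each product $g_{k,i} y^{[i]}$, with $y^{[i]} \in W_1^1[0,1] \hookrightarrow C[0,1]$ bounded, lands in $L_2[0,1]$, and summing over $i$ controls $\| y^{(k)} \|_{L_2[0,1]}$ by $\| y \|_{\mathcal D_F}$. Summing over $k = \overline{0,m}$ gives the desired bound. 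The structure of $\mathfrak F_n$ — namely that the entries $f_{k,j}$ appearing in the first $m$ quasi-derivative steps are exactly those known to be $L_2$, while the purely-$L_1$ entries $f_{k,j}$ only enter the steps $k > m$ — is precisely what makes the $W_2^m$ (rather than merely $W_1^m$) estimate go through, and keeping track of which coefficients are $L_1$ versus $L_2$ is the main bookkeeping obstacle.

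\medskip

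\textbf{Density.} For density I would show that the test-function space $\mathfrak D = C_0^\infty(0,1)$, or at least $C^\infty[0,1]$, is contained in $\mathcal D_F$ and is dense in $W_2^m[0,1]$; since $C^\infty[0,1]$ is dense in $W_2^m[0,1]$ by a standard mollification argument, it suffices to verify $C^\infty[0,1] \subset \mathcal D_F$. For smooth $y$, each quasi-derivative $y^{[k]}$ is, by the inductive structure of \eqref{quasi}, a sum of a smooth term $(y^{[k-1]})'$ and lower-order terms of the form $f_{k,j} y^{[j-1]}$; since the $f_{k,j} \in L_1[0,1]$ and the factors $y^{[j-1]}$ are absolutely continuous, one checks inductively that $y^{[k]} \in W_1^1[0,1]$ for $k = \overline{0,n-1}$, which is exactly the membership condition \eqref{defDF}. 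Hence $C^\infty[0,1] \subset \mathcal D_F$, and density of $\mathcal D_F$ in $W_2^m[0,1]$ follows.

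\medskip

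The main obstacle I expect is the density step in the sense of making the embedding genuinely \emph{dense} rather than merely proving an inclusion: one must be careful that the smooth functions are dense in $W_2^m[0,1]$ in the correct (equivalent) norm introduced above, and that the quasi-derivatives of a smooth function indeed satisfy the $W_1^1$-regularity demanded by $\mathcal D_F$ despite the coefficients $f_{k,j}$ being only $L_1$. The continuous-embedding estimate is conceptually routine but requires the careful inductive unwinding of \eqref{quasi} and attention to which coefficients are square-integrable, so that the target space is $W_2^m$ and not a weaker space.
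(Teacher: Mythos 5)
Your continuous-embedding argument is correct and essentially the paper's: because $f_{k,j}=0$ for $k<m$, the recursion \eqref{quasi} trivializes to $y^{[k]}=y^{(k)}$ for $k\le m-1$, and the only nontrivial step is $y^{(m)}=y^{[m]}+\sum_{j=1}^m f_{m,j}y^{(j-1)}$ with $f_{m,j}\in L_2[0,1]$ and $y^{(j-1)}\in W_1^1[0,1]\hookrightarrow C[0,1]$, which yields $\|y\|_{W_2^m[0,1]}\le C\|y\|_{\mathcal D_F}$. Your more elaborate inductive unwinding is harmless but unnecessary given this structure.

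The density half, however, has a genuine and fatal gap: the inclusion $C^\infty[0,1]\subset\mathcal D_F$ is \emph{false} in general. Membership in $\mathcal D_F$ requires each quasi-derivative $y^{[k]}$, $k=\overline{0,n-1}$, to lie in $W_1^1[0,1]$, i.e.\ to be absolutely continuous — not merely integrable. For smooth $y$ one has $y^{[m]}=y^{(m)}-\sum_{j=1}^m f_{m,j}y^{(j-1)}$, and since the entries $f_{m,j}$ are only $L_2$-functions, this is in general not even continuous, let alone in $W_1^1[0,1]$. Concretely, for $n=2$ take $F=\begin{bmatrix}\sigma & 0\\ -\sigma^2 & -\sigma\end{bmatrix}$ with $\sigma\in L_2[0,1]\setminus W_1^1[0,1]$: then $y^{[1]}=y'-\sigma y$, and no nonzero smooth $y$ (not even $y\equiv 1$) belongs to $\mathcal D_F$. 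Your inductive step asserting that $f_{k,j}\in L_1$ and $y^{[j-1]}$ absolutely continuous give $y^{[k]}\in W_1^1$ conflates $L_1$-membership of the product with $W_1^1$-membership. The paper avoids this by a different mechanism: it filters $\mathcal D_F$ through intermediate spaces $\mathcal D_F^s$, $s=\overline{m,n-1}$, and, given $y\in W_2^m[0,1]$, approximates $y^{[m]}\in L_2[0,1]$ by functions $h_r\in W_1^1[0,1]$, then \emph{solves} the first-order system of quasi-derivative equations with inhomogeneity $h_r$ and matched initial data, producing approximants $y_r$ whose $m$-th quasi-derivative equals $h_r$ by construction; iterating this up the chain $\mathcal D_F^s\subset\mathcal D_F^{s-1}$ gives density. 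The essential point your proposal misses is that $\mathcal D_F$ consists of functions adapted to $F$ through cancellations in the quasi-derivatives, so dense subsets must be manufactured from the ODE system itself rather than drawn from a fixed universal class of smooth functions.
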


\begin{proof}
For $s = \overline{m,n-1}$, consider the Banach spaces
$$
\mathcal D_F^s := \{ y \colon y^{[k]} \in W_1^1[0,1], \, k = \overline{0, s} \}
$$
with the corresponding norms
$$
\| y \|_{\mathcal D_F^s} := \sum_{k = 0}^s \| y^{[k]} \|_{W_1^1[0,1]}.
$$
Clearly, $\mathcal D_F = \mathcal D_F^{n-1}$. Let us prove that (i) $\mathcal D_F^m$ is continuously and densely embedded in $W_2^m[0,1]$, (ii) $\mathcal D_F^s$ is continuously and densely embedded in $\mathcal D_F^{s-1}[0,1]$ for $s = \overline{m+1, n-1}$. Obviously, the assertions (i) and (ii) together yield the claim of the lemma.

Let $y \in \mathcal D_F^m$. In view of \eqref{quasi} and the structure of $F \in \mathfrak F_n$, we have
\begin{align*}
& y^{(k)} = y^{[k]} \in W_1^1[0,1], \quad k = \overline{0,m-1}, \\
& y^{(m)} = y^{[m]} + \sum_{j = 1}^m f_{m,j} y^{(j-1)} \in L_2[0,1].
\end{align*}
Therefore, $y \in W_2^m[0,1]$. Moreover, we have
\begin{align*}
\| y^{[k]} \|_{L_2[0,1]} & \le \| y^{[k]} \|_{W_1^1[0,1]}, \quad k = \overline{0,m}, \\
\| y^{(m)} \|_{L_2[0,1]} & \le \| y^{[m]} \|_{L_2[0,1]} + \sum_{j = 1}^m \| f_{m,j} \|_{L_2[0,1]} \| y^{(j-1)} \|_{W_1^1[0,1]} \\
& \le \| y^{[m]} \|_{W_1^1[0,1]} + C \sum_{j = 0}^{m-1} \| y^{[j]} \|_{W_1^1[0,1]}.
\end{align*}
Hence
$$
\| y \|_{W_2^m[0,1]} = \sum_{k = 0}^m \| y^{(k)} \|_{L_2[0,1]} \le
C \sum_{k = 0}^m \| y^{[k]} \|_{W_1^1[0,1]} = \| y \|_{\mathcal D_F^m}.
$$
Thus, the embedding $\mathcal D_F^m \subset W_2^m[0,1]$ is continuous.

Next, let us construct a sequence $\{ y_r \}_{r \ge 1} \subset \mathcal D_F^m$ that approximates a function $y \in W_2^m[0,1]$. Note that, for $y \in W_2^m[0,1]$, the quasi-derivatives $y^{[k]}$ are correctly defined for $k = \overline{0,m}$, and $y^{[m]} \in L_2[0,1]$. Since $W_1^1[0,1]$ is dense in $L_2[0,1]$, there exists a sequence $\{ h_r \}_{r \ge 1}$ in $W_1^1[0,1]$ such that $\| h_r - y^{[m]} \|_{L_2[0,1]} \to 0$ as $r \to \infty$. The relations \eqref{quad} for $k = \overline{0,m}$ can be rewritten as the first-order system
$$
Y_m' = (F_m(x) + J_m) Y_m + y^{[m]}(x) e_m, \quad x \in (0, 1),
$$
where $Y_m$ is the column vector $[y^{[j]}]_{j = 0}^{m-1}$, $F_m(x)$ and $J_m$ are the $(m \times m)$ upper left submatrices of $F(x)$ and $J$, respectively, the matrix $J$ was defined in \eqref{defJ}, $e_m$ is the $m$-th column of the unit matrix. Let us consider the analogous system
\begin{equation} \label{sysYmr}
Y_{m,r}' = (F_m(x) + J_m) Y_{m,r} + h_r(x) e_m, \quad x \in (0, 1), 
\end{equation}
with respect to an unknown vector $Y_{m,r}(x)$, $r \ge 1$. The initial value problem for \eqref{sysYmr} with the initial condition $Y_{m,r}(0) = Y_m(0)$ has the unique solution $Y_{m,r}(x) = [y_{m,r,k}(x)]_{k = 0}^{m-1}$ such that $y_{m,r,k} = y_{m,r,0}^{[k]} \in W_1^1[0,1]$ for $k = \overline{0,m-1}$,
$h_r = y_{m,r,0}^{[m]} \in W_1^1[0,1]$, and $\| y_{m,r,k} - y^{[k]} \|_{L_2[0,1]} \to 0$ as $r \to \infty$. In other words, $y_r := y_{m,r,0} \in \mathcal D_F^m$ and $\| y_r - y \|_{W_2^m[0,1]} \to 0$ as $r \to \infty$. Hence, $\mathcal D_F^m$ is dense in $W_2^m[0,1]$.

Now, consider the embedding $\mathcal D_F^s \subset \mathcal D_F^{s-1}$, which is, obviously, continuous: $\| y \|_{\mathcal D_F^{s-1}} \le \| y \|_{\mathcal D_F^s}$. Let $y \in \mathcal D_F^{s-1}$. Then, it follows from \eqref{quad} that $y^{[s]} \in L_1[0,1]$. Due to the density of $W_1^1[0,1]$ in $L_1[0,1]$, there exists a sequence $\{ h_r \}_{r \ge 1} \subset W_1^1[0,1]$ such that $\| h_r - y^{[s]} \|_{L_1[0,1]} \to 0$ as $r \to \infty$. Similarly to \eqref{sysYmr}, we construct the system
$$
Y_{s,r}' = (F_s(x) + J_s) Y_{s,r} + h_r(x) e_s, \quad x \in (0, 1), 
$$
and prove that the first entry $y_r := y_{s,r,0}$ of its solution $Y_{s,r}$ belongs to $\mathcal D_F^s$ and $\| y_r - y \|_{\mathcal D_F^{s-1}} \to 0$ as $r \to \infty$. Thus, the embedding $\mathcal D_F^s \subset \mathcal D_F^{s-1}$ is dense, which completes the proof.
\end{proof}

\subsection{Main results and proofs} \label{sub:main}

In this subsection, we obtain the main results of this paper on the regularization of even order differential expressions. Namely, we construct the class of all the matrices $F(x) \in \mathfrak F_n$ associated to the differential expression $\ell_n(y)$ with fixed coefficients $\mathcal T = (\tau_{\nu})_{\nu = 0}^{n-1}$ and study the properties of this class. We begin with the rigorous definition.

\begin{df} \label{def:ass}
The matrix function $F(x) \in \mathfrak F_n$ is called \textit{associated} with the differential expression $\ell_n(y)$ if $\ell_n(y) = y^{[n]}$ for every $y \in \mathcal D_F$, where the quasi-derivatives $y^{[k]}$ and $\mathcal D_F$ are defined by \eqref{quasi} and \eqref{defDF}, respectively, by using the entries $f_{k,j}$ of $F(x)$.
\end{df}

For each $\mathcal T = (\tau_{\nu})_{\nu = 0}^{n-1} \in \mathfrak T_n$, denote by $\mathfrak F(\mathcal T)$ the set of all the associated matrices for the differential expression $\ell_n(y)$ with the coefficients $\mathcal T$.
It can be easily shown that
$\mathfrak F(\mathcal T) \cap \mathfrak F(\tilde{\mathcal T}) = \varnothing$ if $\mathcal T \ne \tilde{\mathcal T}$. Here, we mean that $\mathcal T = \tilde{\mathcal T}$ for $\mathcal T = (\tau_{\nu})_{\nu = 0}^{n-1}$ and $\tilde{\mathcal T} = (\tilde \tau_{\nu})_{\nu = 0}^{n-1}$ of class $\mathfrak T_n$ if $\tau_{\nu} = \tilde \tau_{\nu}$ in $W_2^{-i_{\nu}}[0,1]$ for $\nu = \overline{0,n-1}$.
Indeed, if the matrix $F \in \mathfrak F_n$ is associated with two coefficient vectors $\mathcal T$ and $\tilde{\mathcal T}$, then we have $\ell_n(y) = \tilde{\ell}_n(y) = y^{[n]}$ for all $y \in \mathcal D_F$. By virtue of Lemma~\ref{lem:embed}, $\mathcal D_F$ is dense in $W_2^m[0,1]$. Consequently, $\ell_n(y) = \tilde{\ell}_n(y)$ for all $y \in W_2^m[0,1]$, which implies $\mathcal T = \tilde{\mathcal T}$.

By virtue of Proposition~\ref{prop:reg}, for every $\mathcal T \in \mathfrak T_n$, at least one associated matrix $F(x)$ exists, so $\mathfrak F(\mathcal T) \ne \varnothing$. This matrix is constructed as $F = \mathscr S_n(Q)$, where $Q = \mathscr Q_n(\Sigma(\mathcal T))$ (see \eqref{tausig} and \eqref{defQ}).
However, there exist other associated matrices.

\begin{thm} \label{thm:invF}
Every matrix $F(x)$ of class $\mathfrak F_n$ is associated with some differential expression $\ell_n(y)$, whose coefficients $\mathcal T = (\tau_{\nu})_{\nu = 0}^{n-1}$ belong to $\mathfrak T_n$. In other words,
$$
\mathfrak F_n = \bigcup_{\mathcal T \in \mathfrak T_n} \mathfrak F(\mathcal T).
$$
\end{thm}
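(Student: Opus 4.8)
The plan is to transfer the statement to the matrices $Q$ via the bijection $\mathscr S_n$ and to invert the Mirzoev–Shkalikov construction $\mathscr Q_n$ up to integration by parts. Given $F \in \mathfrak F_n$, I would set $Q := \mathscr S_n^{-1}(F) \in \mathfrak Q_n$. By Proposition~\ref{prop:relFQ}, the quasi-derivative generated by $F$ satisfies $(y^{[n]}, z) = (-1)^m (y^{(m)}, z^{(m)}) + \sum_{r,j=0}^m (q_{r,j} y^{(r)}, z^{(j)})$ for all $y \in \mathcal D_F$, $z \in \mathfrak D$, while by Proposition~\ref{prop:Q} the expression $\ell_n$ with coefficients $\mathcal T$ produces the same type of form with $Q$ replaced by $\mathscr Q_n(\Sigma(\mathcal T))$. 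Since the border entries of $Q$ lie in $L_2[0,1]$, each of these forms is continuous in $y$ with respect to the $W_2^m$-norm; hence, using the density of $\mathcal D_F$ in $W_2^m[0,1]$ from Lemma~\ref{lem:embed}, the matrix $F$ is associated with $\ell_n$ for the coefficients $\mathcal T$ as soon as the two bilinear forms coincide on $W_2^m[0,1] \times \mathfrak D$. Thus the theorem reduces to the following claim: for every $Q \in \mathfrak Q_n$ there is a tuple $\Sigma = (\sigma_\nu)$ with $\sigma_\nu \in L_2[0,1]$ such that $\mathscr Q_n(\Sigma)$ and $Q$ define the same form; then $\tau_\nu := (-1)^{i_\nu} \sigma_\nu^{(i_\nu)}$ yields the required $\mathcal T \in \mathfrak T_n$.

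To handle the forms I would encode $Q$ by its symbol $\Phi_Q(x,\xi,\eta) = \sum_{r,j} q_{r,j}(x) \xi^r \eta^j$. As only the test function $z$ may be integrated by parts (the argument $y$ is not compactly supported), the sole admissible move is that, for $p,q \le m-1$ and $a \in W_1^1[0,1]$, adding the total-derivative symbol $a' \xi^p \eta^q + a\, \xi^p \eta^q (\xi + \eta)$ to $\Phi_Q$ leaves the form unchanged. A direct computation shows that the symbol of $\mathscr Q_n(\Sigma)$ equals $\sum_{k=0}^{m-1} \sigma_{2k} (\xi\eta)^k (\xi+\eta)^{m-k} + \sum_{k=0}^{m-1} \sigma_{2k+1} (\xi\eta)^k (\xi+\eta)^{m-k-1} (\eta - \xi)$, so that the pair $\sigma_{2k}, \sigma_{2k+1}$ contributes precisely the homogeneous component of total degree $m+k$, through the two linearly independent polynomials $(\xi\eta)^k(\xi+\eta)^{m-k}$ and $(\xi\eta)^k(\xi+\eta)^{m-k-1}(\eta-\xi)$. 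In particular these two symbols take the values $(1,1)$ and $(1,-1)$ at the corner monomials $\xi^k\eta^m$ and $\xi^m\eta^k$, so the corresponding $2\times 2$ system is invertible.

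I would then prove the claim by induction on the total degree $d = 0, 1, \ldots, 2m-1$, maintaining the invariant that, after processing degree $d$, the symbols of $Q$ and of the current approximant agree in all degrees $\le d$. At each step the entries of $\mathfrak Q_n$ on the antidiagonal $r + j = d$ split into the (at most two) corner entries, which lie in the last row/column and hence in $L_2[0,1]$, and the interior entries $q_{r,j}$ with $r,j \le m-1$, which lie in $L_1[0,1]$. For $d \ge m$ (with $d = m+k$) I would determine $\sigma_{2k}, \sigma_{2k+1}$ from the two corner entries, after subtracting the contributions already fixed at degree $d-1$, using the invertible system above; the interior entries are then matched by choosing the free functions $a = a_{p,q}$ of the total-derivative symbols with $p+q = d$, whose derivative term $a'$ sits exactly at the interior position $(p,q)$. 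Solving $a' = (\text{the remaining } L_1\text{-data})$ by one integration produces $a_{p,q} \in W_1^1[0,1]$, whose undifferentiated terms feed into degree $d+1$ and drive the next step. A bookkeeping check shows that at every degree the number of unknowns (two $\sigma$'s when $d \ge m$, plus one free function per interior position) equals the number of entries, so the cascade is uniquely and triangularly solvable.

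Finally I would track regularity along the cascade: every null-generator parameter is obtained by integrating $L_1$-data once, hence lies in $W_1^1[0,1] \hookrightarrow L_2[0,1]$, while the $\sigma$'s are combinations of the $L_2$ corner entries and such $W_1^1$ parameters; therefore all $\sigma_\nu \in L_2[0,1]$ and $\mathcal T = (\tau_\nu) \in \mathfrak T_n$. I expect the main obstacle to be exactly this inter-degree coupling: the two coefficients $\sigma_{2k}, \sigma_{2k+1}$ available on a given antidiagonal under-determine its $m-k+1$ entries, so the matching cannot be done degree by degree with the $\sigma$'s alone. The essential mechanism is that the derivative term of each admissible total-derivative symbol falls one degree below its undifferentiated terms, which lets the interior excess at each level be absorbed through integration by parts while feeding a correction forward. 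Verifying that this triangular system closes consistently — and that the $L_1$-versus-$L_2$ regularity classes and the successive antiderivatives stay synchronized throughout — is the delicate part of the argument.
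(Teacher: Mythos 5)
Your proposal is correct, but it takes a mechanically different route from the paper. The paper's proof is a one-shot linear-algebra argument: it shows (Lemma~\ref{lem:basis}) that the matrices $\chi_{\nu,i}$ for \emph{all} orders $i = \overline{0,i_\nu}$ form a basis of the space $\mathfrak M_n$ supported antidiagonal by antidiagonal, so $Q = \mathscr S_n^{-1}(F)$ has a unique exact pointwise decomposition $Q = \sum_{\nu,i}\tau_{\nu,i}\chi_{\nu,i}$ with $\tau_{\nu,i_\nu}\in L_2$, $\tau_{\nu,i}\in L_1$ for $i<i_\nu$; it then defines the coefficients directly as distributional sums $\tau_\nu := \sum_i(-1)^i\tau_{\nu,i}^{(i)} \in W_2^{-i_\nu}$ (Lemma~\ref{lem:findtau}, verified by direct calculation), with no induction and no antiderivatives needed in the proof itself. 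You instead insist on reducing $Q$ to the \emph{pure} Mirzoev--Shkalikov form $\mathscr Q_n(\Sigma)$ modulo the null total-derivative symbols $a'\xi^p\eta^q + a\,\xi^p\eta^q(\xi+\eta)$, running an inductive cascade over the antidiagonal degree and integrating $a' = (\text{residual})$ at each interior position. Both arguments ultimately rest on the same two facts — the symbols of $\chi_{2k,i}$ and $\chi_{2k+1,i}$ are $(\xi\eta)^k(\xi+\eta)^i$ and $(\xi\eta)^k(\xi+\eta)^i(\eta-\xi)$, homogeneous of degree $\nu+i$, and only the two top-order symbols per antidiagonal hit the corner monomials $\xi^k\eta^m$, $\xi^m\eta^k$, with the invertible $(1,1)$, $(1,-1)$ pattern — and your counting, triangular solvability, and $L_1$-versus-$L_2$ bookkeeping all check out (in particular, the $(m,m)$ entry is never touched, and $W_1^1[0,1]\hookrightarrow L_2[0,1]$ keeps the $\sigma_\nu$ in $L_2$). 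What your route buys is slightly more: it exhibits a genuine $\Sigma$ with $\sigma_\nu \in L_2[0,1]$ so that the form of $Q$ coincides with that of $\mathscr Q_n(\Sigma)$, i.e., every form class contains a Mirzoev--Shkalikov representative — a fact the paper recovers separately in Corollary~\ref{cor:construct} via the antiderivative formula \eqref{findtaunu}; the cost is the inter-degree coupling you must manage by hand, which the paper's device of allowing mixed-order coefficients $\tau_{\nu,i}$ and absorbing them into distributional derivatives avoids entirely. One minor inefficiency: your appeal to density (Lemma~\ref{lem:embed}) is unnecessary — equality of the two bilinear forms on $W_2^m[0,1]\times\mathfrak D$ restricts directly to $y \in \mathcal D_F$, which is all that Definition~\ref{def:ass} requires (density is used in the paper only to prove disjointness of the sets $\mathfrak F(\mathcal T)$, a statement you do not need here).
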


In order to prove Theorem~\ref{thm:invF}, we need some auxiliary lemmas. Consider an arbitrary matrix function $F(x)$ of $\mathfrak F_n$.
Find $Q := \mathscr S_n^{-1}(F) \in \mathfrak Q_n$. Consider the constant matrices $\chi_{\nu,i}$ of size $(m + 1) \times (m + 1)$ defined by \eqref{defchi} for $\nu = \overline{0, n-1}$, $i = \overline{0,i_{\nu}}$. For example, for $n = 2$ and $n = 4$, we have
\begin{align} \label{chi2}
n = 2 \colon \quad &
\chi_{0,0} = 
\begin{bmatrix}
1 & 0 \\
0 & 0
\end{bmatrix}, \quad
\chi_{0,1} = 
\begin{bmatrix}
0 & 1 \\
1 & 0
\end{bmatrix}, \quad
\chi_{1,0} = 
\begin{bmatrix}
0 & 1 \\
-1 & 0
\end{bmatrix}, \\ \nonumber
n = 4 \colon \quad &  \chi_{0,0} = 
\begin{bmatrix}
1 & 0 & 0 \\
0 & 0 & 0 \\
0 & 0 & 0
\end{bmatrix}, \quad
\chi_{0,1} = 
\begin{bmatrix}
0 & 1 & 0 \\
1 & 0 & 0 \\
0 & 0 & 0
\end{bmatrix}, \quad
\chi_{0,2} = 
\begin{bmatrix}
0 & 0 & 1 \\
0 & 2 & 0 \\
1 & 0 & 0
\end{bmatrix}, \quad
\chi_{1,0} = 
\begin{bmatrix}
0 & 1 & 0 \\
-1 & 0 & 0 \\
0 & 0 & 0
\end{bmatrix}, \\ \nonumber &
\chi_{1,1} = 
\begin{bmatrix}
0 & 0 & 1 \\
0 & 0 & 0 \\
-1 & 0 & 0
\end{bmatrix}, \quad
\chi_{2,0} = 
\begin{bmatrix}
0 & 0 & 0 \\
0 & 1 & 0 \\
0 & 0 & 0
\end{bmatrix}, \quad
\chi_{2,1} = 
\begin{bmatrix}
0 & 0 & 0 \\
0 & 0 & 1 \\
0 & 1 & 0
\end{bmatrix}, \quad
\chi_{3,0} = 
\begin{bmatrix}
0 & 0 & 0 \\
0 & 0 & 1 \\
0 & -1 & 0
\end{bmatrix}.
\end{align}

\begin{lem} \label{lem:basis}
The matrices $\chi_{\nu,i}$, $\nu = \overline{0,n-1}$, $i = \overline{0,i_{\nu}}$, form a basis in the linear space
$$
\mathfrak M_n := \bigl\{ [a_{r,j}]_{r,j = 0}^m \colon a_{r,j} \in \mathbb R, \, a_{m,m} = 0 \bigr\}.
$$
\end{lem}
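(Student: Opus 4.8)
The plan is to prove that the $\chi_{\nu,i}$ are linearly independent and then close the argument by a dimension count, which is the cheaper half of the basis property. A direct computation gives $\dim \mathfrak M_n = (m+1)^2 - 1$, and summing the number of admissible pairs $(\nu,i)$ produces the same value: the even indices $\nu = 2k$ contribute $m-k+1$ matrices and the odd indices $\nu = 2k+1$ contribute $m-k$ matrices, so the total is $\sum_{k=0}^{m-1}\bigl((m-k+1)+(m-k)\bigr) = (m+1)^2 - 1$. Hence, once linear independence is established and it is checked that every $\chi_{\nu,i}$ lies in $\mathfrak M_n$, the family is automatically a basis.

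The structural observation I would record first is that each $\chi_{\nu,i}$ is supported on a single anti-diagonal. Indeed, from \eqref{defchi} the nonzero entries of $\chi_{2k,i}$ occupy the positions $(s+k,\,i-s+k)$ and those of $\chi_{2k+1,i}$ the positions $(s+k,\,i+1-s+k)$, so in both cases $r+j = \nu+i =: d$. Since $d \le 2m-1$ for every admissible pair, the $(m,m)$ entry (which lies on $r+j = 2m$) always vanishes, so $\chi_{\nu,i} \in \mathfrak M_n$. Because matrices supported on distinct anti-diagonals are automatically independent, a vanishing combination $\sum c_{\nu,i}\chi_{\nu,i}=0$ decouples over $d = 0,1,\dots,2m-1$, and it remains only to prove linear independence of the generators sharing a fixed value of $d$.

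Fixing $d$, I would encode a matrix supported on the anti-diagonal $r+j=d$ by the polynomial $\sum_r a_{r,d-r}\,t^r$. A short binomial computation then identifies the even generator $\chi_{2k,d-2k}$ with $t^k(1+t)^{d-2k}$ (using $\sum_s C_{d-2k}^s t^s = (1+t)^{d-2k}$) and the odd generator $\chi_{2k+1,d-2k-1}$ with $t^k(1-t)(1+t)^{d-2k-1}$ (after factoring and simplifying $(1+t)-2t = 1-t$). Distinct generators within each family carry distinct values of $k$, so independence of the whole anti-diagonal set amounts to showing that no nontrivial relation $\sum_k \alpha_k\, t^k(1+t)^{d-2k} + \sum_k \beta_k\, t^k(1-t)(1+t)^{d-2k-1} = 0$ can hold.

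The heart of the proof, and the step I expect to be the main obstacle, is to render such a relation transparently impossible by the right change of variable. I would divide through by $(1+t)^d$ and set $v := t/(1+t)^2$ and $w := (1-t)/(1+t)$; the relation becomes $P(v) + w\,Q(v) = 0$ with $P(v) = \sum_k \alpha_k v^k$ and $Q(v) = \sum_k \beta_k v^k$. The key identity $w^2 = 1 - 4v$, equivalently $v = (1-w^2)/4$, lets me rewrite the relation purely in terms of $w$; since $t \mapsto w$ is a Möbius bijection, the identity holds for infinitely many $w$ and hence identically in $w$. Splitting into even and odd parts then gives $P\bigl((1-w^2)/4\bigr) \equiv 0$ and $Q\bigl((1-w^2)/4\bigr) \equiv 0$, and because $(1-w^2)/4$ is non-constant this forces $P \equiv Q \equiv 0$, i.e. all $\alpha_k = \beta_k = 0$. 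Locating this linearizing substitution is the delicate point; once it is available, independence within each anti-diagonal, and therefore the basis property, follows at once.
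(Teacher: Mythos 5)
Your proposal is correct, and its skeleton coincides with the paper's: both observe that each $\chi_{\nu,i}$ is supported on the single anti-diagonal $r+j=\nu+i$, so that a vanishing combination decouples over $d=\overline{0,n-1}$, and both close with a counting argument (the paper counts per diagonal --- the number of generators with $\nu+i=d$ equals the length of that diagonal --- while you count globally, $\sum_{k=0}^{m-1}\bigl((m-k+1)+(m-k)\bigr)=(m+1)^2-1=\dim\mathfrak M_n$; the two counts are equivalent). The real difference is at the one nontrivial step: the paper simply asserts that the generators sharing a diagonal are linearly independent, with no argument given, whereas you actually prove it, encoding the diagonal-$d$ generators as the polynomials $t^k(1+t)^{d-2k}$ and $t^k(1-t)(1+t)^{d-2k-1}$ (both identifications check out against \eqref{defchi}, using $(1+t)-2t=1-t$ for the odd family) and then linearizing via $v=t/(1+t)^2$, $w=(1-t)/(1+t)$, $w^2=1-4v$, with the even/odd split in $w$ forcing $P\equiv Q\equiv 0$. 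That substitution argument is valid (the Möbius map supplies infinitely many $w$, and composition with the non-constant $(1-w^2)/4$ is injective on polynomials), so your proof is complete where the paper's is only a sketch. It is worth noting that the same step admits a cheaper triangular argument, which is presumably what the paper had in mind: for the minimal $k$ occurring in a relation $\sum_k\bigl(\alpha_k t^k(1+t)^{d-2k}+\beta_k t^k(1-t)(1+t)^{d-2k-1}\bigr)=0$, the generators with $k'>k$ contribute nothing in degrees $k$ and $d-k$, and the coefficients there are $\alpha_k+\beta_k$ and $\alpha_k-\beta_k$ respectively, giving $\alpha_k=\beta_k=0$ and an induction. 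Your substitution buys an elegant closed-form mechanism at the cost of more machinery; the triangular extraction is shorter and stays entirely within coefficient comparison.
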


\begin{proof}
Due to \eqref{defchi}, every matrix $\chi_{\nu,i}$ has non-zero entries $\chi_{\nu,i; r,j}$ only on the diagonal $r + j = d$, where $d = \nu + i$. In general, for any diagonal with number $d = 0, 1, 2, \dots, n-1$, we have the number of the corresponding matrices $\chi_{\nu, i}$ equal to the length of this diagonal. Moreover, these matrices are linearly independent. This concludes the proof.
\end{proof}

It follows from Lemma~\ref{lem:basis} that any matrix function $Q \in \mathfrak Q_n$ admits the unique representation
\begin{equation} \label{Qtau}
Q(x) = \sum_{\nu = 0}^{n-1} \sum_{i = 0}^{i_{\nu}} \tau_{\nu,i}(x) \chi_{\nu,i},
\end{equation}
where
\begin{equation} \label{reqtau}
\tau_{\nu, i_{\nu}} \in L_2[0,1], \quad \tau_{\nu, i} \in L_1[0,1], \quad \nu = \overline{0, n-1}, \: i = \overline{0,i_{\nu}-1}.
\end{equation}

Note that the right-hand side of \eqref{defQ} is the special case of \eqref{Qtau} with $\tau_{\nu, i_{\nu}} = \sigma_{\nu}$ and $\tau_{\nu, i} = 0$ for $i < i_{\nu}$. Direct calculations prove the following lemma.

\begin{lem} \label{lem:findtau}
Let $Q(x)$ be given by formula \eqref{Qtau}, where $\tau_{\nu, i}$ are arbitrary functions satisfying \eqref{reqtau}. Then, for any $y \in W_2^m[0,1]$, the relation \eqref{quad} holds, where the coefficients $\mathcal T = (\tau_{\nu})_{\nu = 0}^{n-1} \in \mathfrak T_n$ of $\ell_n(y)$ are defined as follows:
\begin{equation} \label{findtau}
\tau_{\nu} := \sum_{i = 0}^{i_{\nu}} (-1)^i \tau_{\nu,i}^{(i)}, \quad \nu = \overline{0,n-1}.
\end{equation}
\end{lem}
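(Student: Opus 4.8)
\noindent\emph{Proof proposal.} The plan is to exploit the linearity of both sides of the asserted identity in the functions $\tau_{\nu,i}$ and thereby reduce to the contribution of a single basis matrix $\chi_{\nu,i}$. Indeed, for fixed $y$ the map sending $Q$ to the functional $z \mapsto \sum_{r,j=0}^m (q_{r,j}y^{(r)}, z^{(j)})$ is linear in $Q$, while $\ell_n(y)$ depends linearly on the coefficients $(\tau_\nu)_{\nu=0}^{n-1}$, and by \eqref{findtau} each $\tau_\nu$ depends linearly on the functions $\tau_{\nu,i}$. Hence it suffices to fix one pair $(\nu_0,i_0)$, set $\tau_{\nu_0,i_0}=g$ and all other $\tau_{\nu,i}=0$, so that $Q=g\,\chi_{\nu_0,i_0}$ by \eqref{Qtau}, and to verify that
$$
\sum_{r,j=0}^m \bigl( g\,\chi_{\nu_0,i_0;r,j}\,y^{(r)}, z^{(j)}\bigr)=(\ell_n(y),z),\qquad z\in\mathfrak D,
$$
where $\ell_n$ carries the single coefficient $\tau_{\nu_0}=(-1)^{i_0}g^{(i_0)}$ and all remaining coefficients vanish. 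The term $y^{(n)}$, always present in \eqref{defln}, yields after $m$-fold integration by parts exactly $(-1)^m(y^{(m)},z^{(m)})$, independently of $Q$, so it reproduces the first summand on the right-hand side of \eqref{quad}.

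The single-term computation is precisely the calculation underlying Proposition~\ref{prop:Q}, but carried out with the general derivative order $i_0$ in place of the maximal value $i_{\nu_0}$. For the even index $\nu_0=2k$, I would move the outer $k$ derivatives of $(-1)^k(\tau_{2k}y^{(k)})^{(k)}$ onto $z$ (the two factors $(-1)^k$ cancel), substitute $\tau_{2k}=(-1)^{i_0}g^{(i_0)}$, transfer the remaining $i_0$ derivatives onto $y^{(k)}z^{(k)}$ to obtain $(g,(y^{(k)}z^{(k)})^{(i_0)})$, and expand by the Leibniz rule; the summand containing $y^{(k+s)}z^{(k+i_0-s)}$ then acquires the coefficient $C_{i_0}^s$, which matches $\chi_{2k,i_0;\,s+k,\,i_0-s+k}$ in \eqref{defchi}. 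All derivative orders occurring here are at most $m$ because $i_0\le i_{\nu_0}=m-k$, so every pairing is well defined. For the odd index $\nu_0=2k+1$, the symmetric expression collapses, after transferring derivatives onto $z$, to $(\tau_{2k+1}y^{(k)},z^{(k+1)})-(\tau_{2k+1}y^{(k+1)},z^{(k)})$; expanding both pairings by Leibniz and reindexing so that both land on the entry $(k+s,\,k+1+i_0-s)$, the coefficient there becomes $C_{i_0}^s-C_{i_0}^{s-1}$, which by Pascal's rule equals $C_{i_0+1}^s-2C_{i_0}^{s-1}$, the value prescribed for $\chi_{2k+1,i_0;\,s+k,\,i_0+1-s+k}$ in \eqref{defchi} (the conventions $C_i^{-1}:=0$ and $C_i^s=0$ for $s>i$ handle the endpoints $s=0$ and $s=i_0+1$).

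Summing these single-term identities over all pairs $(\nu,i)$ recovers \eqref{quad} with the full matrix $Q$ of \eqref{Qtau} and with the coefficients $\tau_\nu=\sum_{i=0}^{i_\nu}(-1)^i\tau_{\nu,i}^{(i)}$ of \eqref{findtau}. It remains to confirm $\mathcal T\in\mathfrak T_n$: by \eqref{reqtau}, the top summand $(-1)^{i_\nu}\tau_{\nu,i_\nu}^{(i_\nu)}$ lies in $W_2^{-i_\nu}[0,1]$ since $\tau_{\nu,i_\nu}\in L_2[0,1]$, while each lower summand with $\tau_{\nu,i}\in L_1[0,1]$, $i<i_\nu$, can be written as $G^{(i_\nu)}$ with $G$ an $(i_\nu-i)$-fold antiderivative of $\pm\tau_{\nu,i}$; as $i_\nu-i\ge 1$, the function $G$ is continuous, hence in $L_2[0,1]$, so that summand also lies in $W_2^{-i_\nu}[0,1]$. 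Thus $\tau_\nu\in W_2^{-i_\nu}[0,1]$ for every $\nu$, and the lemma follows.

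\noindent\emph{Expected obstacle.} The only genuinely delicate point is the bookkeeping in the odd-index case: reindexing the two Leibniz expansions so that they combine on a common matrix entry, and matching the resulting coefficient $C_{i_0}^s-C_{i_0}^{s-1}$ against the definition $C_{i_0+1}^s-2C_{i_0}^{s-1}$ via Pascal's identity while keeping the signs straight. The even case and the final assembly, including the verification of $\mathcal T\in\mathfrak T_n$, are routine.
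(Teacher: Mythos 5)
Your proposal is correct and matches the paper's intent: the paper disposes of this lemma with the words ``direct calculations,'' and your argument — linear reduction to a single basis matrix $\chi_{\nu_0,i_0}$, integration by parts with $\tau_{\nu_0}=(-1)^{i_0}g^{(i_0)}$, Leibniz expansion, and the Pascal-identity check $C_{i_0}^s-C_{i_0}^{s-1}=C_{i_0+1}^s-2C_{i_0}^{s-1}$ in the odd-index case — is precisely the computation underlying Proposition~\ref{prop:Q} carried out for general $i_0\le i_{\nu_0}$, as the paper intends. Your additional verification that $\mathcal T\in\mathfrak T_n$ (writing the lower-order terms $\tau_{\nu,i}^{(i)}$, $i<i_\nu$, as $i_\nu$-th derivatives of continuous antiderivatives) is also sound.
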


\begin{proof}[Proof of Theorem~\ref{thm:invF}]
Let $F \in \mathfrak F_n$ and $Q = \mathscr S_n^{-1}(F)$. Then, $Q \in \mathfrak Q_n$, and so $Q(x)$ admits the unique representation \eqref{Qtau}. Using the coefficients $\tau_{\nu,i}$ of this representation, find $\tau_{\nu}$ by \eqref{findtau} and consider the differential expression $\ell_n(y)$ with the coefficients $\mathcal T = (\tau_{\nu})_{\nu = 0}^{n-1} \in \mathfrak T_n$. Using Lemma~\ref{lem:findtau} and Proposition~\ref{prop:relFQ}, we conclude that $\ell_n(y) = y^{[n]}$ in $\mathfrak D'$ for any $y \in \mathcal D_F$. Hence $F \in \mathfrak F(\mathcal T)$.
\end{proof}

\begin{cor} \label{cor:construct}
The set $\mathfrak F(\mathcal T)$ of associated matrices can be described constructively. Let $\mathcal T = (\tau_{\nu})_{\nu = 0}^{n-1} \in \mathfrak T_n$ be fixed. Choose arbitrary functions $\tau_{\nu,i} \in L_1[0,1]$ and constants $c_{\nu,i} \in \mathbb C$ for $\nu = \overline{0,n-1}$, $i = \overline{0,i_{\nu}-1}$. Taking formula \eqref{findtau} into account, find
\begin{equation} \label{findtaunu}
\tau_{\nu,i_{\nu}} := (-1)^{i_{\nu}}\left( \tau_{\nu} - \sum_{i = 0}^{i_{\nu}-1} (-1)^i \tau_{\nu,i}^{(i)} \right)^{(-i_{\nu})} + \sum_{i = 0}^{i_{\nu}-1} c_{\nu,i} x^i, \quad \nu = \overline{0,n-1}.
\end{equation}

The notation $y^{(-i)}$ in \eqref{findtaunu} is used for a fixed antiderivative of order $i$. For example, the choice of the antiderivative can be fixed by the conditions
$$
\int_0^1 x^k w(x) \, dx = 0, \quad k = \overline{0,i-1}, \quad w(x) = y^{(-i)}(x).
$$

Clearly, $\tau_{\nu,i_{\nu}} \in L_2[0,1]$. Then, using the functions $\tau_{\nu,i}$, $\nu = \overline{0,n-1}$, $i = \overline{0,i_{\nu}}$, find $Q(x)$ by \eqref{Qtau} and $F = \mathscr S_n(Q)$. Denote the matrix $F(x)$ constructed by this algorithm as $\mathscr F(\mathcal T, (\tau_{\nu,i},c_{\nu,i}))$.  The functions $\tau_{\nu,i}$ for $i < i_{\nu}$ and the constants $c_{\nu,i}$ can be chosen arbitrarily as parameters.
Thus
\begin{equation} \label{descrFT}
\mathfrak F(\mathcal T) = \bigl\{ \mathscr F(\mathcal T, (\tau_{\nu,i},c_{\nu,i}))\colon \tau_{\nu,i} \in L_1[0,1], \, c_{\nu,i} \in \mathbb C, \, \nu = \overline{0,n-2}, \, i = \overline{0,i_{\nu}-1} \bigr\}.
\end{equation}
\end{cor}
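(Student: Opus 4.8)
The plan is to establish the set equality \eqref{descrFT} through two inclusions, both built on the bijective chain $F \leftrightarrow Q = \mathscr S_n^{-1}(F) \leftrightarrow (\tau_{\nu,i})$, where the last correspondence is the unique basis expansion \eqref{Qtau} guaranteed by Lemma~\ref{lem:basis}. The role of formula \eqref{findtaunu} is simply to invert the relation \eqref{findtau}: given the lower-order coefficients $\tau_{\nu,i}$ with $i < i_{\nu}$ and the target $\tau_{\nu}$, it recovers the top coefficient $\tau_{\nu,i_{\nu}}$, with the $i_{\nu}$-fold antiderivative ambiguity absorbed into the polynomial $\sum_{i=0}^{i_{\nu}-1} c_{\nu,i} x^i$, which spans the kernel of the operator $d^{i_{\nu}}/dx^{i_{\nu}}$.

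For the inclusion $\supseteq$, I would first check that the algorithm is well defined, i.e., that $\tau_{\nu,i_{\nu}}$ produced by \eqref{findtaunu} lies in $L_2[0,1]$, so that the resulting $Q$ belongs to $\mathfrak Q_n$. Indeed, since $\tau_{\nu} \in W_2^{-i_{\nu}}[0,1]$, its $i_{\nu}$-fold antiderivative is in $L_2[0,1]$ (by \eqref{tausig} it coincides, up to a polynomial, with $(-1)^{i_{\nu}}\sigma_{\nu} \in L_2[0,1]$), while each term $\tau_{\nu,i}^{(i)}$ with $i < i_{\nu}$ and $\tau_{\nu,i} \in L_1[0,1]$ has an $i_{\nu}$-fold antiderivative equal to an $(i_{\nu}-i)$-fold antiderivative of an $L_1$ function, hence at least absolutely continuous and thus in $L_2[0,1]$. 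Differentiating \eqref{findtaunu} exactly $i_{\nu}$ times then annihilates the polynomial and reproduces \eqref{findtau}. Consequently, Lemma~\ref{lem:findtau} applies to the matrix $Q$ built from these $\tau_{\nu,i}$ via \eqref{Qtau}, yielding $\ell_n(y)$ with the prescribed coefficients $\mathcal T$; combined with Proposition~\ref{prop:relFQ} for $F = \mathscr S_n(Q)$, this gives $\ell_n(y) = y^{[n]}$ for all $y \in \mathcal D_F$, that is, $F \in \mathfrak F(\mathcal T)$.

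For the inclusion $\subseteq$, I would take an arbitrary $F \in \mathfrak F(\mathcal T)$, set $Q := \mathscr S_n^{-1}(F) \in \mathfrak Q_n$, and expand it uniquely as \eqref{Qtau} with coefficients $(\tau_{\nu,i})$ obeying \eqref{reqtau}. By Lemma~\ref{lem:findtau} and Proposition~\ref{prop:relFQ}, this $F$ is associated with the expression whose coefficients $\tilde{\mathcal T} = (\tilde\tau_{\nu})_{\nu=0}^{n-1}$ are given by \eqref{findtau}. Since the sets $\mathfrak F(\cdot)$ are pairwise disjoint (as shown before Theorem~\ref{thm:invF}) and $F \in \mathfrak F(\mathcal T)$, we must have $\tilde\tau_{\nu} = \tau_{\nu}$ in $W_2^{-i_{\nu}}[0,1]$ for every $\nu$. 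Solving the resulting identity $\sum_{i=0}^{i_{\nu}}(-1)^i \tau_{\nu,i}^{(i)} = \tau_{\nu}$ for $\tau_{\nu,i_{\nu}}$ by $i_{\nu}$-fold integration recovers precisely the first term of \eqref{findtaunu} up to an additive polynomial of degree $< i_{\nu}$; matching this polynomial (which is forced so that the right-hand side equals the already known $L_2$-function $\tau_{\nu,i_{\nu}}$) with $\sum_{i=0}^{i_{\nu}-1} c_{\nu,i} x^i$ determines the constants $c_{\nu,i}$ uniquely. Thus $F = \mathscr F(\mathcal T,(\tau_{\nu,i},c_{\nu,i}))$ for these parameter choices, which establishes \eqref{descrFT}.

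I expect the main obstacle to be the regularity bookkeeping in the $\supseteq$ direction, namely verifying that the antiderivatives appearing in \eqref{findtaunu} indeed land in $L_2[0,1]$ so that $Q \in \mathfrak Q_n$, together with the careful identification of the $i_{\nu}$-fold antiderivative freedom with the free constants $c_{\nu,i}$. The remaining verification that differentiating \eqref{findtaunu} reproduces \eqref{findtau} is a routine computation, and the disjointness of the family $\{\mathfrak F(\mathcal T)\}$ does the decisive work of pinning down $\mathcal T$ in the $\subseteq$ direction.
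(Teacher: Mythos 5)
Your proof is correct and follows essentially the same route the paper takes: the corollary is an immediate consequence of the machinery behind Theorem~\ref{thm:invF} (bijectivity of $\mathscr S_n$, the unique expansion \eqref{Qtau} from Lemma~\ref{lem:basis}, Lemma~\ref{lem:findtau} with Proposition~\ref{prop:relFQ}, and the disjointness of the sets $\mathfrak F(\mathcal T)$ established via Lemma~\ref{lem:embed}), which the paper leaves implicit and you merely spell out as two inclusions. Your added checks --- that $\tau_{\nu,i_{\nu}} \in L_2[0,1]$ so $Q \in \mathfrak Q_n$, and that the antiderivative freedom is exactly the polynomial kernel of $d^{i_{\nu}}/dx^{i_{\nu}}$ matched by the constants $c_{\nu,i}$ --- are accurate and consistent with the paper's construction.
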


\begin{thm} \label{thm:dom}
Let $\mathcal T \in \mathfrak T_n$ be fixed. Then $\mathcal D_F = \mathcal D_{\tilde F}$ for any $F, \tilde F \in \mathfrak F(\mathcal T)$.
\end{thm}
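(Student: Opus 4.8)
The plan is to prove the intrinsic characterization
\begin{equation*}
\mathcal D_F = \bigl\{ y \in W_2^m[0,1] \colon \ell_n(y) \in L_1[0,1] \bigr\},
\end{equation*}
where $\ell_n(y)$ is the distribution of Proposition~\ref{prop:Q}. Since the right-hand side depends only on $\mathcal T$ (through $\ell_n$) and not on the particular associated matrix, this identity gives $\mathcal D_F = \mathcal D_{\tilde F}$ for all $F, \tilde F \in \mathfrak F(\mathcal T)$ at once. One inclusion is immediate: if $y \in \mathcal D_F$, then $y \in W_2^m[0,1]$ by Lemma~\ref{lem:embed}, and the recursion \eqref{quasi} with $y^{[k]} \in W_1^1[0,1]$ shows $y^{[n]} \in L_1[0,1]$; as $F$ is associated with $\mathcal T$, $\ell_n(y) = y^{[n]} \in L_1[0,1]$.

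For the reverse inclusion, fix $y \in W_2^m[0,1]$ with $\ell_n(y) \in L_1[0,1]$ and argue by a downward regularity bootstrap. First, the quasi-derivatives $y^{[k]}$ are all well defined as distributions: in \eqref{quasi} the factors $y^{[j-1]}$ multiplied by coefficients are, thanks to $f_{k,j} = 0$ for $j > m+1$, only $y^{[0]}, \dots, y^{[m]}$, which are genuine functions since $y^{[k]} = y^{(k)}$ for $k = \overline{0,m-1}$ and $y^{[m]} = y^{(m)} - \sum_{j=1}^m f_{m,j} y^{(j-1)} \in L_2[0,1]$. The identity of Proposition~\ref{prop:relFQ} together with Proposition~\ref{prop:Q} then gives $y^{[n]} = \ell_n(y) \in L_1[0,1]$.

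Now rewrite \eqref{quasi} for $k = \overline{m+1,n}$ as $(y^{[k-1]})' = y^{[k]} + \sum_{j=1}^{m+1} f_{k,j} y^{[j-1]}$, and let $k$ run from $n$ down to $m+1$. At each step the right-hand side lies in $L_1[0,1]$: the sum pairs $y^{(0)}, \dots, y^{(m-1)}$ (bounded) with $L_1$-coefficients and $y^{[m]}$ (in $L_2$) with the $L_2$-coefficient $f_{k,m+1}$, while $y^{[k]}$ equals $\ell_n(y) \in L_1[0,1]$ for $k = n$ and is already known to be in $W_1^1[0,1]$ from the previous step otherwise. Since a distribution with $L_1$-derivative is absolutely continuous, $(y^{[k-1]})' \in L_1[0,1]$ forces $y^{[k-1]} \in W_1^1[0,1]$. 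The bootstrap yields $y^{[k]} \in W_1^1[0,1]$ for $k = \overline{m,n-1}$; combined with the automatic membership for $k = \overline{0,m-1}$, this gives $y \in \mathcal D_F$.

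The main obstacle is the reverse inclusion, and specifically the two points on which the bootstrap rests. The first is justifying that Proposition~\ref{prop:relFQ}, and hence the relation $y^{[n]} = \ell_n(y)$ in $\mathfrak D'$, remains valid for every $y \in W_2^m[0,1]$ and not merely on $\mathcal D_F$; this should follow from the same integration by parts, since the higher quasi-derivatives occur only paired with the smooth, compactly supported test function $z$. The second is the integrability bookkeeping guaranteeing that each $(y^{[k-1]})'$ lands in $L_1[0,1]$, which is precisely where the defining structure of $\mathfrak F_n$ --- the $L_2$ entries in row $m$ and column $m+1$ and the zeros for $j > m+1$ --- is essential.
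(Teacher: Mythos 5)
Your proof is correct, but it takes a genuinely different route from the paper's. The paper never characterizes $\mathcal D_F$ intrinsically: it compares the two matrices directly, introducing the difference quasi-derivatives $\hat y^{[k]}$ built from $\hat f_{k,j} = f_{k,j} - \tilde f_{k,j}$ (which vanish identically for $k < m$, so all objects stay regular functions from the start), deriving the distributional identity \eqref{sm1}, using the fact that $Q = \mathscr S_n^{-1}(F)$ and $\tilde Q = \mathscr S_n^{-1}(\tilde F)$ generate the same bilinear form (whence \eqref{hatquad}), and then --- after an extension of the interval to $(0,1+\eps)$ to handle antiderivatives of test functions that need not vanish at $x = 1$ --- arriving at the closed formula \eqref{sm2}, which exhibits each $\hat y^{[m+k]}$ as a sum of iterated antiderivatives of order at least one of $L_1$-functions, hence in $W_1^1[0,1]$. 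You instead prove the stronger, associated-matrix-free statement $\mathcal D_F = \{ y \in W_2^m[0,1] \colon \ell_n(y) \in L_1[0,1] \}$, from which the theorem is immediate, via a downward bootstrap through the recursion \eqref{quasi} with quasi-derivatives interpreted distributionally. Both arguments rest on the same two structural pillars --- the zero pattern and $L_2$-entries of $\mathfrak F_n$, and the validity of the bilinear-form identity on all of $W_2^m[0,1]$ --- but your route buys an intrinsic description of the domain (a by-product the paper does not state), while the paper's route buys freedom from distributional quasi-derivatives at the cost of the interval-extension trick. The one point you rightly flag, the extension of Proposition~\ref{prop:relFQ} to arbitrary $y \in W_2^m[0,1]$, does go through: the inductive integration by parts is purely distributional, and since $f_{k,j} = 0$ for $j > m+1$, the coefficient sums involve only $y^{[0]}, \dots, y^{[m]}$, which are genuine functions, so no products of distributions ever arise. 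Two small repairs: for a general $F \in \mathfrak F(\mathcal T)$ you should invoke Lemma~\ref{lem:findtau} applied to the representation \eqref{Qtau} of $Q = \mathscr S_n^{-1}(F)$, rather than Proposition~\ref{prop:Q} (which concerns only the special choice $Q = \mathscr Q_n(\Sigma)$), together with the disjointness $\mathfrak F(\mathcal T) \cap \mathfrak F(\tilde{\mathcal T}) = \varnothing$ to guarantee that the coefficients produced by \eqref{findtau} are exactly $\mathcal T$; and in your forward inclusion the membership $y \in W_2^m[0,1]$ follows already from the definition \eqref{defDF}, so Lemma~\ref{lem:embed} is not needed there.
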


\begin{proof}
Let $\mathcal T$, $F(x) = [f_{k,j}]_{k,j = 1}^n$, and $\tilde F(x) = [\tilde f_{k,j}]_{k,j = 1}^n$ satisfy the hypothesis of the theorem.
In this proof, we use the notations $y^{[k]}_F$ and $y^{[k]}_{\tilde F}$ for the quasi-derivatives defined by formulas \eqref{quasi} by the entries of the matrices $F(x)$ and $\tilde F(x)$, respectively.
Denote $\hat f_{k,j} := f_{k,j} - \tilde f_{k,j}$ and
\begin{align} \nonumber
\hat y^{[j]} & := 0, \quad j = \overline{0,m-1}, \\ \label{defyhat}
\hat y^{[k]} & := (\hat y^{[k-1]})' - \sum_{j = 1}^m \hat f_{k,j} y^{(j-1)} - \hat f_{k,m+1} y^{[m]}_F - \tilde f_{k,m+1} \hat y^{[m]}, \quad k = \overline{m,n}.
\end{align}

In order to prove the theorem, it is sufficient to show that, for any $y \in W_2^m[0,1]$,
\begin{equation} \label{haty}
\hat y^{[j]} \in W_1^1[0,1], \quad j = \overline{m,n-1}.
\end{equation}

Indeed, if $y \in \mathcal D_F$, then 
$y_{\tilde F}^{[k]} = y_F^{[k]} + \hat y^{[k]}$. Consequently, \eqref{haty} implies that $y_{\tilde F}^{[k]} \in W_1^1[0,1]$, $k = \overline{0,n-1}$, so $y \in \mathcal D_{\tilde F}$. 

Put $Q := \mathscr S_n^{-1}(F)$, $\tilde Q := \mathscr S_n^{-1}(\tilde F)$, and $\hat q_{r,j} := q_{r,j} - \tilde q_{r,j}$. Let $y \in W_2^m[0,1]$
Using \eqref{Seven} and \eqref{defyhat}, we derive
\begin{align*}
\hat y^{[m]} & = -\sum_{j = 1}^m \hat f_{m,j} y^{(j-1)} = \sum_{j = 1}^{m-1} \hat q_{j,m} y^{(j)}, \\
\hat y^{[k]} & = (\hat y^{[k - 1]})' - \sum_{j = 1}^m (\hat f_{k,j} - \hat f_{k,m+1} f_{m,j} - \tilde f_{k,m+1} \hat f_{m,j}) y^{(j-1)} - \hat f_{k,m+1} y^{(m)}  \\
& = (\hat y^{[k-1]})' + (-1)^k \sum_{j = 0}^m \hat q_{j, 2m-k} y^{(j)}, \quad k = \overline{m+1,n}.
\end{align*}
Hence, for $z \in \mathfrak D$ and $k = \overline{0,m-1}$, we have
$$
(\hat y^{[m+k]}, z) = -(\hat y^{[m+k-1]}, z') + (-1)^{m+k} \sum_{j = 0}^m (\hat q_{j,m-k} y^{(j)}, z).
$$
By induction, we obtain
$$
(\hat y^{[m+k]}, z) = (-1)^{m+k} \sum_{l = 0}^k \sum_{j = 0}^m (\hat q_{j,m-k+l} y^{(j)}, z^{(l)}).
$$
The change of summation indices implies
\begin{equation} \label{sm1}
(\hat y^{[m+k]}, z) = (-1)^{m+k} \sum_{r = 0}^m \sum_{j = m-k}^m (\hat q_{r,j} y^{(r)}, z^{(j - (m-k))}).
\end{equation}

By virtue of Proposition~\ref{prop:relFQ} and Corollary~\ref{cor:construct}, for the both matrices $Q(x)$ and $\tilde Q(x)$, the relation \eqref{quad} holds with the same differential expression $\ell_n(y)$. Hence
\begin{gather} \nonumber
\sum_{r,j = 0}^m (\hat q_{r,j} y^{(r)}, g^{(j)}) = 0, \quad y \in W_2^m[0,1], \quad g \in \mathfrak D, \\ \label{hatquad}
\sum_{r = 0}^m \sum_{j = m-k}^m (\hat q_{r,j} y^{(r)}, g^{(j)}) = -\sum_{r = 0}^m \sum_{j = 0}^{m-k-1} (\hat q_{r,j} y^{(r)}, g^{(j)}).
\end{gather}

It can be shown that the relation \eqref{hatquad} is valid not only for $g \in \mathfrak D$ but also for $g = z^{(-s)}$, $z \in \mathfrak D$, $s \in \mathbb N$, where
$$
z^{(0)} := z, \quad z^{(-s)} := \int_0^x z^{(-(s-1))}(t) \, dt.
$$
Antiderivatives of $\mathfrak D$-functions are infinitely differentiable and have a support $[a,b] \subset (0,1]$. In other words, the derivatives $g^{(k)}(1)$, $k = 0, 1, \dots$ can be non-zero. In order to overcome this difficulty, one can extend the interval $(0,1)$ to $(0, 1+\eps)$, $\eps > 0$, put $\hat \tau_{\nu} = 0$, $\hat q_{r,j} = 0$ on $(1,1+\eps)$, extend $y$ and $g$ so that $y \in W_2^m[0,1+\eps]$ and $g \in C_0^{\infty}(0, 1+\eps)$, respectively. Consequently, we can apply the relation \eqref{hatquad} to the function $g := z^{(-(m-k))}$ in \eqref{sm1}:
$$
(\hat y^{[m+k]}, z) = (-1)^{m+k+1} \sum_{r = 0}^m \sum_{j = 0}^{m-k-1} (\hat q_{r,j} y^{(r)}, z^{(j - (m-k))}).
$$
Integration by parts implies
$$
(\hat y^{[m+k]}, z) = \sum_{r = 0}^m \sum_{j = 0}^{m-k-1}(-1)^{j+1} \bigl( (\hat q_{r,j} y^{(r)})^{\langle j - (m-k)\rangle}, z \bigr),
$$
where 
$$
y^{\langle 0 \rangle} := y, \quad y^{\langle -s \rangle}(x) = -\int_x^1 y^{\langle -(s-1) \rangle}(t) \, dt, \quad s \ge 1.
$$
Hence
\begin{equation} \label{sm2}
\hat y^{[m+k]} = \sum_{r = 0}^m \sum_{j = 0}^{m-k-1} (-1)^{j+1} (\hat q_{r,j} y^{(r)})^{\langle j - (m-k)\rangle}, \quad k = \overline{0,m-1}.
\end{equation}

Recall that $Q, \tilde Q \in \mathfrak Q_n$ and $y \in W_2^m[0,1]$. Therefore, $\hat q_{r,j} y^{(r)} \in L_1[0,1]$ for all $r$ and $j$ in \eqref{sm2}. Furthermore, $j - (m-k) \le -1$. This implies \eqref{haty} and so concludes the proof.
\end{proof}

\section{Odd order} \label{sec:odd}

In this section, we provide the regularization results, analogous to the ones in Section~\ref{sec:even}, for odd orders.

Consider the differential expression \eqref{defln} of order $n = 2m+1$, $m \in \mathbb N$, with $\mathcal T := (\tau_{\nu})_{\nu = 0}^{n-1} \in \mathfrak T_n$, where
$$
\mathfrak T_n := \bigl\{ \mathcal T = (\tau_{\nu})_{\nu = 0}^{n-1} \colon \tau_{\nu} \in W_1^{-i_{\nu}}[0,1], \, \nu = \overline{0,n-1} \bigr\},
$$
and the singularity orders $(i_{\nu})_{\nu = 0}^{n-1}$ are defined by \eqref{defi}. In other words, the relations \eqref{tausig} are valid for some $\sigma_{\nu} \in L_1[0,1]$.
If $y \in W_1^m[0,1]$, then $\ell_n(y) \in \mathfrak D'$ and the following relation holds:
\begin{equation} \label{quad-odd}
(\ell_n(y), z) = (-1)^m ( y^{(m+1)}, z^{(m)} ) + \sum_{r,j = 0}^m (q_{r,j} y^{(r)}, z^{(j)}), \quad z \in \mathfrak D,
\end{equation}
where $Q(x) = [q_{r,j}]_{r,j = 0}^m$ and the matrices $\chi_{\nu,i}$ are defined by \eqref{defQ} and \eqref{defchi}, respectively.

Construct the matrix function $F(x) = [f_{k,j}]_{k,j =1}^n$ by the rule $F = \mathscr S_n(Q)$ given by the formulas
\begin{equation} \label{Sodd}
\begin{cases}
f_{k,j} := (-1)^k q_{j-1, 2m+1-k}, \quad k = \overline{m+1, 2m+1}, \, j = \overline{1, m+1}, \\
f_{k,j} := 0, \quad \text{otherwise}.
\end{cases}
\end{equation}

For this matrix $F(x)$, Proposition~\ref{prop:reg} holds (see \cite{MS19} and Theorem 2.2 in \cite{Bond23-mmas}).

Define the spaces of matrix functions
\begin{align*}
\mathfrak Q_n := \bigl\{Q(x) = [q_{r,j}]_{r,j = 0}^m \colon & q_{r,j} \in L_1[0,1], \, r,j = \overline{0,m} \bigr\}, \\
\mathfrak F_n := \bigl\{ F(x) = [f_{k,j}]_{k,j = 1}^n \colon & f_{k,j} \in L_1[0,1], \, k = \overline{m+1,2m+1}, \, j = \overline{1,m+1}, \\ & f_{k,j} = 0, \, k < m+1 \: \text{or} \: j > m+1 \bigr\}.
\end{align*}

The structure of the spaces $\mathfrak F_n$ can be symbolically presented as follows:
\begin{equation*}
n = 3 \colon \quad
F = \begin{bmatrix}
        0 & 0 & 0 \\
        L_1 & L_1 & 0 \\
        L_1 & L_1 & 0 
    \end{bmatrix}, \qquad
n = 5 \colon \quad
F = \begin{bmatrix}
        0 & 0 & 0 & 0 & 0 \\
        0 & 0 & 0 & 0 & 0 \\
        L_1 & L_1 & L_1 & 0 & 0 \\
        L_1 & L_1 & L_1 & 0 & 0 \\
        L_1 & L_1 & L_1 & 0 & 0 
    \end{bmatrix}.
\end{equation*}

As well as in the even-order case, the mapping $\mathscr S_n \colon \mathfrak Q_n \to \mathfrak F_n$ defined by \eqref{Sodd} is a bijection. The results of Mirzoev and Shkalikov \cite{MS19} imply that the matrix function $F = \mathscr S_n(\mathscr Q_n(\Sigma))$, where $\Sigma = \Sigma(\mathcal T)$, is associated with the odd-order differential expression $\ell_n(y)$ in the sense of Definition~\ref{def:ass}.

Similarly to the even-order case, denote by $\mathfrak F(\mathcal T)$ the set of associated matrices for $\ell_n(y)$ with the coefficients $\mathcal T$. Theorem~\ref{thm:invF} is also valid for odd $n$. Indeed, one can easily show that the matrices $\chi_{\nu,i}$, $\nu = \overline{0,n-1}$, $i = \overline{0,i_{\nu}}$ form a basis in the linear space
$$
\mathfrak M_n := \bigl\{ [a_{r,j}]_{r,j = 0}^m \colon a_{r,j} \in \mathbb R \bigr\}.
$$

Note that the only difference in the matrices $\chi_{\nu,i}$ between the cases $n = 2m$ and $n = 2m+1$ is the additional matrix with the unit entry $a_{m,m}$ for the odd order. For example, for $n = 3$, we have the following matrices (compare with \eqref{chi2}):
$$
\chi_{0,0} = 
\begin{bmatrix}
1 & 0 \\
0 & 0
\end{bmatrix}, \quad
\chi_{0,1} = 
\begin{bmatrix}
0 & 1 \\
1 & 0
\end{bmatrix}, \quad
\chi_{1,0} = 
\begin{bmatrix}
0 & 1 \\
-1 & 0
\end{bmatrix},
\quad 
\chi_{2,0} =
\begin{bmatrix}
0 & 0 \\
0 & 1
\end{bmatrix}.
$$

Any matrix $Q \in \mathfrak Q_n$ admits the unique representation \eqref{Qtau}, where $\tau_{\nu,i} \in L_1[0,1]$, $\nu = \overline{0,n-1}$, $i = \overline{0,i_{\nu}}$. On the other hand, if $Q(x)$ is given by \eqref{Qtau}, then, for any $y \in W_1^m[0,1]$, the relation \eqref{quad-odd} holds, where the coefficients $\mathcal T = (\tau_{\nu})_{\nu = 0}^{n-1}$ are defined by \eqref{findtau} and $\mathcal T \in \mathfrak T_n$. This implies the assertion of Theorem~\ref{thm:invF} for odd orders. 

The set $\mathfrak F(T)$ is described by formula \eqref{descrFT}. The functions $\tau_{\nu,i_{\nu}}$ constructed by \eqref{findtaunu} belong to $L_1[0,1]$, $\nu = \overline{0,n-1}$. Theorem~\ref{thm:dom} also holds for odd orders.

\begin{remark}
In the inverse problem theory (see \cite{Bond21, Bond23-mmas, Bond22, Bond23-results}), differential expressions of form \eqref{defln} with $\tau_{n-1} = 0$ are considered. Denote 
$$
\mathfrak T_n^0 := \bigl\{ \mathcal T = (\tau_{\nu})_{\nu = 0}^{n-1} \in \mathfrak T_n \colon \tau_{n-1} = 0 \bigr\}.
$$
Since the set $\mathfrak F(\mathcal T)$ of associated matrices for $\mathcal T \in \mathfrak T_n^0$ is constructed according to Corollary~\ref{cor:construct}, we obtain
$$
\bigcup_{\mathcal T \in \mathfrak T_n^0} \mathfrak F(\mathcal T) = \mathfrak F_n^0,
$$
where
$$
\mathfrak F_n^0 := \bigl\{ F \in \mathfrak F_n \colon \mbox{trace}\,(F) = 0 \bigr\}
$$
for both even and odd values of $n$.
\end{remark}

\section{Inverse problems} \label{sec:ip}

In this section, inverse spectral problems are investigated for the differential equation generated by the expression $\ell_n(y)$. We define the spectral characteristics, study their dependence on the associated matrix, and prove the uniqueness theorems for the inverse problems (Theorems~\ref{thm:uniq} and \ref{thm:sd}). In addition, we compare our novel results with the known uniqueness theorems from the previous studies \cite{Bond21, Bond22, Bond23-mmas}.

Consider the differential expression $\ell_n(y)$ with coefficients $\mathcal T \in \mathfrak T_n^0$ (i.e. $\tau_{n-1} = 0$). Let $F(x)$ be a fixed associated matrix for $\ell_n(y)$, that is, $F \in \mathfrak F(\mathcal T) \subset \mathfrak F_n^0$. Define the quasi-derivatives $y^{[k]}$ and the domain $\mathcal D_F$ by \eqref{quasi} and \eqref{defDF}, respectively. According to Definition~\ref{def:ass}, for any $y \in \mathcal D_F$, we have $\ell_n(y) \in L_1[0,1]$. Below, we call a function $y$ a \textit{solution} of the equation
\begin{equation} \label{eqv}
\ell_n(y) = \la y, \quad x \in (0, 1),
\end{equation}
if $y \in \mathcal D_F$ and the relation \eqref{eqv} holds a.e. on $(0,1)$.

Denote by $\{ C_k(x, \la) \}_{k = 1}^n$ and by $\{ \Phi_k(x, \la) \}_{k = 1}^n$ the solutions of equation \eqref{eqv} satisfying the initial conditions
\begin{equation} \label{icC}
C_k^{[j-1]}(0,\la) = \de_{k,j}, \quad j = \overline{1, n},
\end{equation}
and the boundary conditions
\begin{equation} \label{bcPhi}
\Phi_k^{[j-1]}(0,\la) = \de_{k,j}, \quad j = \overline{1, k}, \quad
\Phi_k^{[n-s]}(1,\la) = 0, \quad s = \overline{k+1,n},
\end{equation}
respectively, where $\de_{k,j}$ is the Kronecker delta.
It has been shown in \cite{Bond21} that the initial value problem solutions $C_k(x, \la)$ exist and are unique. The boundary value problem solutions $\Phi_k(x, \la)$ are uniquely defined for all complex $\la$ except for a countable set. Moreover, for each fixed $x \in [0,1]$ and $j = \overline{1,n}$, the quasi-derivatives $C_k^{[j-1]}(x, \la)$ are entire in $\la$ and $\Phi_k^{[j-1]}(x, \la)$ are meromorphic in $\la$. Furthermore, the matrix functions $C(x, \la) = [C_k^{[j-1]}(x, \la)]_{j,k = 1}^n$ and $\Phi(x, \la) = [\Phi_k^{[j-1]}(x, \la)]_{j,k = 1}^n$ are related as follows:
\begin{equation} \label{defM}
\Phi(x, \la) = C(x, \la) M(\la),
\end{equation}
where the matrix function $M(\la) = [M_{j,k}(\la)]_{j,k = 1}^n$ is called \textit{the Weyl-Yurko matrix} of equation~\eqref{eqv}. 

It can be shown similarly to \cite{Yur02, Bond21, Bond23-mmas} that $M(\la)$ is a unit lower-triangular matrix. Furthermore, its non-trivial entries $M_{j,k}(\la)$ for $j > k$ are meromorphic functions with countable sets of poles. More precisely, the poles of $M_{j,k}(\la)$ coincide with eigenvalues of the boundary value problem $\mathcal L_k$ for equation \eqref{eqv} with the boundary conditions
$$
y^{[j-1]}(0) = 0, \quad j = \overline{1,k}, \qquad
y^{[n-s]}(1) = 0, \quad s = \overline{k+1,n},
$$
which correspond to \eqref{bcPhi}.

Now, suppose that we have two associated matrices $F(x)$ and $\tilde F(x)$ of $\mathfrak F(\mathcal T)$. Denote the quasi-derivatives constructed by $F(x)$ and $\tilde F(x)$ by $y^{[k]}_F$ and $y^{[k]}_{\tilde F}$, respectively.
We agree that, if a certain object $\al$ is related to $F(x)$, then the symbol $\tilde \al$ with tilde will denote the analogous object related to $\tilde F(x)$. The following theorem establishes the relation between the Weyl-Yurko matrices corresponding to different associated matrices.

\begin{thm} \label{thm:L}
Suppose that $\mathcal T \in \mathfrak T_n^0$, $F, \tilde F \in \mathfrak F(\mathcal T)$. Then $M(\la) = L \tilde M(\la)$, where $L \in \mathfrak L_n$,
$$
\mathfrak L_n = \bigl\{ L = [l_{j,k}]_{j,k = 1}^n \in \mathbb C^{n \times n} \colon l_{j,k} = \delta_{j,k} \:\: \text{for} \:\: j \le n-m \:\: \text{or} \:\: k > m\bigr\}.
$$
\end{thm}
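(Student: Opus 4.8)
The plan is to compare the two regularizations at the two endpoints, using that by Theorem~\ref{thm:dom} the equation $\ell_n(y) = \la y$ has the same solution space for $F$ and for $\tilde F$. Because $C(0,\la) = \tilde C(0,\la) = I$, relation \eqref{defM} gives $M(\la) = \Phi(0,\la)$ and $\tilde M(\la) = \tilde\Phi(0,\la)$, so everything reduces to comparing the quasi-derivative vectors of the Weyl solutions at $x = 0$. For a fixed solution $y$ set $\mathbf y_F(x) = (y^{[0]}_F, \dots, y^{[n-1]}_F)^{\mathsf T}(x)$ and analogously $\mathbf y_{\tilde F}$; since each determines $y$, they are linked by a transition matrix $\mathbf y_{\tilde F}(x) = G(x,\la)\,\mathbf y_F(x)$. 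Its first $n-m$ rows coincide with those of the identity, because the quasi-derivatives of order $\le n-m-1$ are the ordinary derivatives $y, y', \dots$ and do not depend on the associated matrix.

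The first key step is to show $G(1,\la) = I$. Here the antiderivative representation obtained in the proof of Theorem~\ref{thm:dom} (formula \eqref{sm2}) is essential: the difference $y^{[k]}_{\tilde F} - y^{[k]}_F$ is a sum of antiderivatives of the functions $\hat q_{r,j} y^{(r)}$ anchored at $x = 1$, so it vanishes at $x = 1$. Hence the quasi-derivatives of every solution agree at $x = 1$ for the two regularizations, and the Weyl boundary conditions at $x = 1$ in \eqref{bcPhi} become literally the same conditions. The flag $V_p = \{y : y^{[j-1]}(1,\la) = 0,\ j = \overline{1,p}\}$ is then common to both regularizations, and since $\tilde\Phi_{k'} \in V_{n-k'} \subseteq V_{n-k}$ for $k' \le k$ I can expand $\Phi_k = \sum_{k' \le k} c_{k,k'}(\la)\, \tilde\Phi_{k'}$.

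It remains to read off the coefficients from the normalization of $\Phi_k$ at $x = 0$ and to translate the expansion into the matrix relation. Testing the $x=0$ conditions in \eqref{bcPhi} against rows $j \le n-m$, where the quasi-derivatives are regularization-independent, forces $c_{k,k'} = \delta_{k,k'}$ for $k \le n-m$ and annihilates all $c_{k,k'}$ with $k' \le n-m$ when $k > n-m$; substituting back yields immediately the row identity $M_{j,\cdot} = \tilde M_{j,\cdot}$ for $j \le n-m$. For the column identity $M_{\cdot,k} = \tilde M_{\cdot,k}$ with $k > m$ one meets the diagonal part of $G(0)$ on the rows $j > n-m$; this is exactly where the hypothesis $\mathcal T \in \mathfrak T_n^0$ (equivalently $F,\tilde F \in \mathfrak F_n^0$, i.e. $\operatorname{trace} F = \operatorname{trace}\tilde F = 0$) enters, since the obstructing couplings are differences of traces and therefore vanish. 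Collecting the surviving entries gives $M = L\tilde M$ with $L = M\tilde M^{-1}$ unit lower triangular and nontrivial only in the block $j > n-m$, $k \le m$, that is $L \in \mathfrak L_n$.

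The principal obstacle, concentrated in the last step, is to prove that $L$ is genuinely independent of $\la$ and has precisely the sparsity of $\mathfrak L_n$. Both hinge on extracting endpoint values of the quasi-derivative differences correctly: as the entries of $F$ lie only in $L_1$ or $L_2$, a pointwise product such as $f_{k,j}(0)\, y^{(j-1)}(0)$ is meaningless, and the boundary values must be taken from the absolutely continuous quasi-derivatives through the antiderivative representation. The $\la$-dependent integral contributions that appear there must be shown to cancel, and for the column identity this cancellation is exactly the vanishing of $\operatorname{trace} F - \operatorname{trace}\tilde F$; I expect this bookkeeping, rather than the structural flag argument, to be the technically demanding part. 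The parity of $n$ only moves the threshold $n-m$ and leaves the argument unchanged.
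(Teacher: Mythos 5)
Your skeleton is essentially the paper's own (Theorem~\ref{thm:dom} gives a common domain; the Weyl solutions are matched through the conditions at $x=1$, the fundamental system $C_k$ through the conditions at $x=0$, and the sparsity of $L$ comes from the structure of $\mathfrak F_n$), but your declared ``first key step'' is false as stated: in general $G(1,\la) \neq I$. Formula \eqref{sm2} cannot be evaluated at the endpoint, because its derivation extends \eqref{hatquad} from $g \in \mathfrak D$ to antiderivatives of test functions via zero-extension of the $\hat q_{r,j}$ beyond $x=1$, and this silently drops boundary terms at $x=1$ when the antiderivative differences $\hat \tau_{\nu,i}$ do not vanish there; the ground truth is the pointwise recursion \eqref{defyhat}. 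Concretely, for $n=2$ take the two associated matrices \eqref{reg2} built from $(\sigma, r)$ and $(\sigma + c, r)$ with $c \neq 0$: both represent the same $\tau_0 = -(\sigma' + r)$, yet $y^{[1]}_{\tilde F} - y^{[1]}_F = -c\,y$, which does not vanish at $x=1$; so $G(1,\la)$ is only unit lower-triangular, never the identity (consistently with Section~\ref{sec:ex}, where exactly this shift produces $m(\la) = l_{2,1} + \tilde m(\la)$ with $l_{2,1} \neq 0$). What survives is weaker but sufficient: the difference of the quasi-derivative vectors is a pointwise unit-lower-triangular transformation involving only $y, \dots, y^{(m-1)}$, so the flags $V_p$ at $x=1$ coincide \emph{as sets} -- this is precisely the paper's conditional set equality \eqref{sm4}, not an entrywise identity -- and your expansion $\Phi_k = \sum_{k' \le k} c_{k,k'} \tilde\Phi_{k'}$ can be rescued on that basis, though not by the argument you give.

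The second gap sits exactly where you yourself locate ``the technically demanding part,'' and the mechanism you propose there is not the right one. For any two matrices of $\mathfrak F(\mathcal T)$ with the \emph{same} $\mathcal T$ one has $\operatorname{trace} F = \operatorname{trace} \tilde F$ automatically, since the trace is determined by $\tau_{n-1}$ alone (only $f_{m,m}$ and $f_{m+1,m+1}$ can be nonzero, and the relevant coefficient $\hat\tau_{n-1,0}$ vanishes because $i_{n-1}=0$ leaves no freedom in \eqref{Qtau}); so there is no ``difference of traces'' left to cancel, and the hypothesis $\mathcal T \in \mathfrak T_n^0$ plays no role in the sparsity of $L$ -- it is assumed only so that the Weyl--Yurko apparatus of Section~\ref{sec:ip} applies. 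The actual mechanism, which simultaneously yields the column identity for $k > m$, the confinement to rows $j > n-m$, and the $\la$-independence of $L$ that you correctly single out but never establish, is that each difference $y^{[l]}_{\tilde F} - y^{[l]}_F$ is a \emph{pointwise} combination $\sum_{i \le m-1} c_{l,i}(x)\, y^{(i)}(x)$ whose coefficients, built from the $\hat\tau_{\nu,i}$ constrained by \eqref{findtau}, lie in $W_1^1[0,1]$ and hence have boundary values; evaluated at $x=0$ on $\tilde C_k$ the corrections become $\sum_i c_{l,i}(0)\,\de_{k,i+1}$ -- constants in $\la$, zero for $k > m$, nonzero only in the block $j > n-m$, $k \le m$, i.e.\ precisely $\mathfrak L_n$. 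The same mechanism is also needed to finish your flag argument (for $k > n-m$ it only kills $c_{k,k'}$ with $k' \le n-m$; proving $\Phi_k = \tilde\Phi_k$ requires the vanishing of the pointwise corrections at $x=0$ under the lower-order zero conditions). This is the content the paper compresses into ``using the special structure of $F$ and $\tilde F$ \dots we prove $L \in \mathfrak L_n$''; since your proposal defers exactly this bookkeeping and the substitute (trace cancellation) is vacuous, the constancy and sparsity of $L$ -- the heart of the theorem -- remain unproven in your outline.
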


Thus, the matrices of $\mathfrak L_n$ have the following structure:
$$
n = 3 \colon \quad
\begin{bmatrix}
1 & 0 & 0 \\
0 & 1 & 0 \\
* & 0 & 1 
\end{bmatrix}, \quad 
n = 4 \colon \quad
\begin{bmatrix}
1 & 0 & 0 & 0\\
0 & 1 & 0 & 0\\
* & * & 1 & 0 \\
* & * & 0 & 1
\end{bmatrix}.
$$

\begin{proof}[Proof of Theorem~\ref{thm:L}]
By virtue of Theorem~\ref{thm:dom}, $\mathcal D_F = \mathcal D_{\tilde F}$. Therefore, it can be easily seen that
\begin{align} \label{sm3}
\bigl\{ y \in \mathcal D_F \colon y^{[j-1]}_F(0) = \de_{k,j}, \: j = \overline{1,k} \bigr\} & =
\bigl\{ y \in \mathcal D_{\tilde F} \colon y^{[j-1]}_{\tilde F}(0) = \de_{k,j}, \: j = \overline{1,k} \bigr\}, \\ \label{sm4}
\bigl\{ y \in \mathcal D_F \colon y^{[j-1]}_F(1) = 0, \: j = \overline{1,k} \bigr\} & =
\bigl\{ y \in \mathcal D_{\tilde F} \colon y^{[j-1]}_{\tilde F}(1) = 0, \: j = \overline{1,k} \bigr\}
\end{align}
for each $k = \overline{1,n}$. Hence $\Phi(x, \la) \equiv \tilde \Phi(x, \la)$. For $C(x, \la)$ and $\tilde C(x, \la)$, the relations \eqref{sm3} and \eqref{sm4} imply that
$$
\tilde C_k(x, \la) = C_j(x, \la) + \sum_{j = k+1}^n l_{j,k} C_j(x, \la), \quad l_{j,k} \in \mathbb C,
$$
that is, $\tilde C(x, \la) \equiv C(x, \la) L$, where $L = [l_{j,k}]_{j,k = 1}^n$ is a unit lower-triangular matrix. Using the special structure of the matrices $F(x)$ and $\tilde F(x)$ of class $\mathfrak F_n$, namely, the relations $f_{k,j} = 0$ for $k < n - m - 1$ or $j > m+1$, we prove that $L \in \mathfrak L_n$. Using the relations $\tilde \Phi(x, \la) \equiv \Phi(x, \la)$, $\tilde C(x, \la) \equiv C(x, \la) L$, and \eqref{defM}, we arrive at the assertion of the theorem.
\end{proof}

The inverse result is also valid:

\begin{thm} \label{thm:uniq}
Suppose that $\mathcal T = (\tau_{\nu})_{\nu = 0}^{n-1}$ and $\tilde{\mathcal T} = (\tilde \tau_{\nu})_{\nu = 0}^{n-1}$ belong to $\mathfrak T_n^0$,
$M(\la)$ and $\tilde M(\la)$ are the Weyl-Yurko matrices defined by the associated matrices $F \in \mathfrak F(\mathcal T)$ and $\tilde F \in \mathfrak F(\tilde{\mathcal T})$, respectively, and $M(\la) = L \tilde M(\la)$, where $L \in \mathfrak L_n$. Then $\mathcal T = \tilde{\mathcal T}$. Thus, the Weyl-Yurko matrix $M(\la)$ known up to a multiplier $L \in \mathfrak L_n$ uniquely specifies the coefficients $\mathcal T \in \mathfrak T_n^0$ of the differential expression $\ell_n(y)$.
\end{thm}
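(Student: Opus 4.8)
The plan is to adapt the method of spectral mappings to the ``$L$-ambiguous'' setting by introducing a single transition matrix that links the two regularizations and showing that it collapses to a $\la$-independent object. First I would record the two fundamental matrix solutions $C,\Phi$ (for $(\mathcal T,F)$) and $\tilde C,\tilde\Phi$ (for $(\tilde{\mathcal T},\tilde F)$). Since $\mathcal T,\tilde{\mathcal T}\in\mathfrak T_n^0$ we have $\tau_{n-1}=\tilde\tau_{n-1}=0$, hence $\operatorname{trace}F\equiv\operatorname{trace}\tilde F\equiv 0$; as $J$ is off-diagonal, the Liouville--Ostrogradskii formula gives $\det C(x,\la)\equiv\det\tilde C(x,\la)\equiv 1$, so $C,\tilde C$ and their inverses are entire in $\la$ for each fixed $x$.

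Next I would define the matrix of spectral mappings
\[
\mathcal P(x,\la):=\Phi(x,\la)\,\tilde\Phi(x,\la)^{-1}.
\]
Using $\Phi=CM$, $\tilde\Phi=\tilde C\tilde M$ (see \eqref{defM}) together with the hypothesis $M=L\tilde M$, the Weyl--Yurko factors cancel and $\mathcal P(x,\la)=C(x,\la)\,L\,\tilde C(x,\la)^{-1}$. The right-hand side is a product of entire matrix functions with a constant middle factor, so $\mathcal P(\cdot,\la)$ is \emph{entire} in $\la$ for each fixed $x$ (all poles of $\Phi$ and $\tilde\Phi$ cancel automatically), and $\det\mathcal P=\det L$ is a nonzero constant, so $\mathcal P^{-1}$ is entire too. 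Note also that $\mathcal P(0,\la)=L$ since $C(0,\la)=\tilde C(0,\la)=I$.

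The second, and hardest, step is to prove that $\mathcal P(x,\la)$ is bounded as $|\la|\to\infty$ for each fixed $x$. Here I would invoke the Birkhoff-type asymptotics of the Weyl solutions $\Phi,\tilde\Phi$ in the sectors of the $\la$-plane; these are available precisely because the antiderivatives $\sigma_\nu$ are integrable on the whole interval $[0,1]$ (cf. \cite{SS20,Bond21,Bond23-mmas}). Since the leading exponential behaviour is governed only by the order $n$, and thus coincides for the two problems, the dominant factors cancel in the product $\Phi\tilde\Phi^{-1}$, leaving $\mathcal P$ bounded uniformly over each Stokes sector; patching the sectors (and using that $\mathcal P$ is entire, hence continuous across the separating rays) yields boundedness on all of $\mathbb C$. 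By Liouville's theorem, $\mathcal P(x,\la)=\mathcal P(x)$ is then independent of $\la$. I expect this asymptotic/Liouville step to be the main obstacle: it requires careful sector-by-sector estimates of the Weyl solutions and matching across Stokes lines, exactly as in the earlier uniqueness proofs, together with reconciling the meromorphic representation $\Phi\tilde\Phi^{-1}$ with the entire one $CL\tilde C^{-1}$.

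Finally I would extract $\mathcal T=\tilde{\mathcal T}$. Substituting $\Phi(x,\la)=\mathcal P(x)\tilde\Phi(x,\la)$ into the first-order systems $\Phi'=(F+J)\Phi$ and $\tilde\Phi'=(\tilde F+J)\tilde\Phi$ gives the gauge relation
\[
\mathcal P'(x)=\bigl(F(x)+J\bigr)\mathcal P(x)-\mathcal P(x)\bigl(\tilde F(x)+J\bigr).
\]
Because $\mathcal P$ carries no $\la$ while $J$ contributes the term $\la E_{n,1}$, the coefficient of $\la$ must vanish, forcing $E_{n,1}\mathcal P=\mathcal P E_{n,1}$; the $\la$-free part, together with the boundary value $\mathcal P(0)=L\in\mathfrak L_n$ and the lower-triangular structure of $\mathfrak F_n$, then pins down $\mathcal P$. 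The point is \emph{not} that $F=\tilde F$: indeed $\mathcal P(0)=L$ need not be the identity, reflecting that distinct associated matrices of the \emph{same} expression are admissible (Theorems~\ref{thm:dom} and~\ref{thm:L}). Rather, this $\la$-independent gauge equivalence shows that $F$ and $\tilde F$ produce the same bilinear form of Proposition~\ref{prop:relFQ} modulo the $\tau_{\nu,i}$-freedom in \eqref{Qtau}. Equating the two forms and recovering the coefficients through the invariant combination \eqref{findtau} of Lemma~\ref{lem:findtau} (equivalently, comparing the subleading Birkhoff terms of the entire matrices $C$ and $\tilde C=\mathcal P^{-1}CL$, which encode $\mathcal T$), I obtain $\ell_n\equiv\tilde\ell_n$ on the common domain $\mathcal D_F=\mathcal D_{\tilde F}$ (Lemma~\ref{lem:embed}), whence $\mathcal T=\tilde{\mathcal T}$.
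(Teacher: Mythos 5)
Your first half coincides with the paper's proof: you form the matrix of spectral mappings $P(x,\la) = \Phi(x,\la)(\tilde\Phi(x,\la))^{-1} = C(x,\la) L (\tilde C(x,\la))^{-1}$, note it is entire in $\la$, and obtain its $\la$-independence from Birkhoff-type sector asymptotics plus Liouville's theorem — exactly the route the paper takes, citing the proof of Theorem~2 of \cite{Bond21} for this step rather than reproving it. Up to the gauge relation $P'(x) = (F(x)+J)P(x) - P(x)(\tilde F(x)+J)$ (the paper's \eqref{relP} with the $J$-commutator terms written out; these are what produce the $p_{k,j-1}$ and $p_{k+1,j}$ terms in \eqref{sm5}) your argument is the paper's argument. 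A minor remark: matching the coefficient of $\la$ gives $E_{n,1}P = P E_{n,1}$, but for a unit lower-triangular $P$ this holds automatically, so that step yields no information.

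The genuine gap is in your final step, which is precisely the novel content of the paper's proof. You assert that the $\la$-independent gauge equivalence ``shows that $F$ and $\tilde F$ produce the same bilinear form of Proposition~\ref{prop:relFQ} modulo the $\tau_{\nu,i}$-freedom in \eqref{Qtau}'' — but that is the statement to be proved, not a consequence of anything you established; likewise, the claim that the structure of $\mathfrak F_n$ together with $P(0)=L$ ``pins down'' $P$ is false as stated, since $P(x)$ genuinely varies in $x$ and the conclusion is emphatically not $F = \tilde F$. What must be extracted from the gauge relation is the identity $\sum_{l,s=0}^m (\hat q_{l,s}\, y^{(l)}, z^{(s)}) = 0$ for all $y \in W_2^m[0,1]$, $z \in \mathfrak D$, where $\hat q = \mathscr S_n^{-1}(F) - \mathscr S_n^{-1}(\tilde F)$. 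The paper does this in three concrete moves: (i) from the first $m-1$ rows and last $m-1$ columns of the gauge relation it deduces the vanishing pattern $p_{k,j}=0$, i.e.\ $P(x) \in \mathfrak L_n$ for every $x$, not just at $x=0$; (ii) via \eqref{Seven} it rewrites the surviving entries as the system $r'_{l,s} + r_{l-1,s} + r_{l,s-1} = \hat q_{l,s}$ with zero boundary data $r_{-1,s}=r_{s,-1}=r_{m,s}=r_{s,m}=0$, where $r_{j-1,2m-k} = (-1)^{k+1} p_{k,j}$; and (iii) a telescoping summation-by-parts (the computation \eqref{sm6}) then annihilates the difference of bilinear forms, after which Lemma~\ref{lem:findtau} and Corollary~\ref{cor:construct} convert equality of forms into $\mathcal T = \tilde{\mathcal T}$. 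Without some version of (i)--(iii), your proof does not close. Your parenthetical alternative — comparing ``subleading Birkhoff terms'' of $C$ and $\tilde C$ — is a different idea, but it is undeveloped and doubtful: for distribution coefficients the subleading asymptotics are expressed through the antiderivatives $\sigma_\nu$, which are not uniquely determined by $\mathcal T$, so it is unclear that this comparison would recover $\mathcal T$ rather than a particular $\Sigma$, and in any case it is not the paper's route.
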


\begin{proof}
Introduce the matrix of spectral mappings 
$$
P(x, \la) = \Phi(x, \la) (\tilde \Phi(x, \la))^{-1}.
$$
Using \eqref{defM} and the relation $M(\la) = L \tilde M(\la)$, we obtain
$$
P(x, \la) = C(x, \la) M(\la) (\tilde M(\la))^{-1} (\tilde C(x, \la))^{-1} = C(x, \la) L (\tilde C(x, \la))^{-1}.
$$
Hence $P(x, \la)$ is entire in $\la$ for each fixed $x \in [0,1]$. Then, similarly to the proof of Theorem~2 in \cite{Bond21}, we show that, for each fixed $x \in [0,1)$, $P(x, \la)$ is a constant unit triangular matrix $P(x) = [p_{k,j}(x)]_{k,j = 1}^n$, which satisfies the relation
\begin{equation} \label{relP}
P'(x) + P(x) \tilde F(x) = F(x) P(x), \quad x \in (0,1).
\end{equation}

Let us prove that \eqref{relP} implies $\mathcal T = \tilde{\mathcal T}$. For definiteness, suppose that $n = 2m$. The odd order case can be investigated analogously. By considering the first $(m-1)$ rows and the last $(m-1)$ columns of \eqref{relP}, we deduce that $p_{k,j} = 0$ $(j < k)$ for $k = \overline{1,m}$ and $j = \overline{m+1, 2m}$, respectively. From the relations for $k = \overline{m, 2m}$ and $j = \overline{1,m+1}$, we derive 
\begin{equation} \label{sm5}
p'_{k,j} + p_{k,j-1} + (\tilde f_{k,j} - \tilde f_{k,m+1} \tilde f_{m,j}) = p_{k+1,j} + (f_{k,j} - f_{k,m+1} f_{m,j}), \quad k = \overline{m+1, 2m}, \: j = \overline{1, m}.
\end{equation}

Using \eqref{Seven}, we transform \eqref{sm5} into the system
\begin{gather} \label{eqr}
    r'_{l,s} + r_{l-1,s} + r_{l,s-1} = \hat q_{l,s}, \quad l,s = \overline{0, m},  \\ \label{bcr}
    r_{-1,s} = r_{s,-1} = r_{m,s} = r_{s,m} = 0, \quad s = \overline{0, m},
\end{gather}
where $\hat q_{l,s} = q_{l,s} - \tilde q_{l,s}$, 
$[q_{l,s}]_{l,s = 0}^m := \mathscr S_n^{-1}(F)$, $[\tilde q_{l,s}]_{l,s = 0}^m := \mathscr S_n^{-1}(\tilde F)$, and $r_{j-1, 2m-k} := (-1)^{k+1} p_{k,j}$, $k = \overline{m,2m}$, $j = \overline{1, m+1}$.

For $y \in W_2^m[0,1]$ and $z \in \mathfrak D$, using \eqref{eqr}, we derive
\begin{align} \nonumber
\sum_{l,s = 0}^m (\hat q_{l,s} y^{(l)}, z^{(s)}) = & \sum_{l,s} (r'_{l,s} y^{(l)}, z^{(s)}) + \sum_{l,s} (r_{l-1,s} y^{(l)}, z^{(s)}) + \sum_{l,s} (r_{l,s-1} y^{(l)}, z^{(s)}) \\ \nonumber
= & \sum_{l,s}((r_{l,s}y^{(l)})', z^{(s)}) - \sum_{l,s}(r_{l,s}y^{(l+1)}, z^{(s)}) \\ \label{sm6} & + \sum_{l,s} (r_{l,s} y^{(l+1)}, z^{(s)}) + \sum_{l,s} (r_{l,s} y^{(l)}, z^{(s+1)}) = 0.
\end{align}

Here, we have applied the index shift, the integration by parts and have taken the boundary conditions \eqref{bcr} into account. Using Lemma~\ref{lem:findtau} and Corollary~\ref{cor:construct}, we obtain the relations
\begin{align*}
(\ell_n(y), z) & = (-1)^m ( y^{(m)}, z^{(m)} ) + \sum_{l,s = 0}^m (q_{l,s} y^{(l)}, z^{(s)}), \\
(\tilde \ell_n(y), z) & = (-1)^m ( y^{(m)}, z^{(m)} ) + \sum_{l,s = 0}^m (\tilde q_{l,s} y^{(l)}, z^{(s)}).
\end{align*}
Combining them with \eqref{sm6}, conclude that $(\ell_n(y), z) = (\tilde \ell_n(y), z)$ for all $y \in W_2^m[0,1]$ and $z \in \mathfrak D$. This implies $\mathcal T = \tilde{\mathcal T}$.
\end{proof}

Let us compare Theorem~\ref{thm:uniq} with the following uniqueness result of \cite{Bond21}. Introduce the space
$$
\mathfrak S_n^0 = \left\{ \Sigma = (\sigma_{\nu})_{\nu = 0}^{n-1} \colon 
\begin{array}{ll}
\sigma_{\nu} \in L_2[0,1] & \text{if $n$ is even}, \\ 
\sigma_{\nu} \in L_1[0,1] & \text{if $n$ is odd},
\end{array}
\: \nu = \overline{0,n-2}, \, \sigma_{n-1} = 0\right\}.
$$

\begin{prop}[\cite{Bond21}] \label{prop:uniq}
Suppose that $\Sigma = (\sigma_{\nu})_{\nu = 0}^{n-1}$ and $\tilde \Sigma = (\tilde \sigma_{\nu})_{\nu = 0}^{n-1}$ belong to $\mathfrak S_n^0$, $F = \mathscr S_n(\mathscr Q_n(\Sigma))$, $\tilde F = \mathscr S_n(\mathscr Q_n(\tilde \Sigma))$, $M(\la)$ and $\tilde M(\la)$ are the Weyl-Yurko matrices of $F$ and $\tilde F$, respectively, and $M(\la) = \tilde M(\la)$. Then $\Sigma = \tilde \Sigma$, that is, $\sigma_{\nu}(x) = \tilde \sigma_{\nu}(x)$ a.e. on $(0,1)$.
\end{prop}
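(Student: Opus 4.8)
The plan is to mimic the proof of Theorem~\ref{thm:uniq} up to the relation \eqref{relP}, and then to use the explicit form of the Mirzoev--Shkalikov matrices to strengthen the conclusion from ``$\mathcal T = \tilde{\mathcal T}$'' to ``$\Sigma = \tilde\Sigma$''. First I would reduce the claim to proving $F = \tilde F$. Indeed, here $F = \mathscr S_n(\mathscr Q_n(\Sigma))$ with $\mathscr S_n$ a bijection, and by Lemma~\ref{lem:basis} the matrices $\chi_{\nu,i_\nu}$, $\nu = \overline{0,n-1}$, are linearly independent (they form a subset of a basis), so the map $\Sigma \mapsto \mathscr Q_n(\Sigma) \mapsto F$ is injective. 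Hence $F = \tilde F$ is equivalent to $\Sigma = \tilde\Sigma$.

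Next I would form the matrix of spectral mappings $P(x,\la) = \Phi(x,\la)(\tilde\Phi(x,\la))^{-1}$. Substituting \eqref{defM} and using the hypothesis $M = \tilde M$ gives $P(x,\la) = C(x,\la)(\tilde C(x,\la))^{-1}$, which is entire in $\la$ for each fixed $x$. Exactly as in the proof of Theorem~\ref{thm:uniq} (a Birkhoff-type asymptotic analysis of $C$ and $\tilde C$ combined with a Liouville-type argument), one shows that $P(x,\la)$ is independent of $\la$, that $P(x) = [p_{k,j}(x)]$ is a constant unit triangular matrix, and that it satisfies the relation \eqref{relP}. The normalizations $C(0,\la) = \tilde C(0,\la) = I$ coming from \eqref{icC} yield the extra initial condition $P(0) = I$; this is precisely where the hypothesis $L = I$ enters, in contrast to Theorem~\ref{thm:uniq}, where $P(0) = L$ may be nontrivial.

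It then remains to extract $\Sigma = \tilde\Sigma$ from \eqref{relP} together with $P(0) = I$, and this is the crux. I would insert the explicit formulas \eqref{defQ}, \eqref{Seven}: in the Mirzoev--Shkalikov regularization the entries $f_{k,j}$ of $F$ depend on the antiderivatives $\sigma_\nu$ themselves (linearly, up to the quadratic correction terms in \eqref{Seven}), not on their derivatives. Reading \eqref{relP} diagonal by diagonal, i.e. along the diagonals $r+j = d$ of $\hat Q := Q - \tilde Q = \mathscr Q_n(\Sigma - \tilde\Sigma)$ from the top order downward and using $P(0) = I$ to initialize, I expect to determine the entries $p_{k,j}$ recursively and simultaneously to force $\sigma_\nu = \tilde\sigma_\nu$ a.e.\ for each $\nu$; equivalently, the argument should give $P \equiv I$, hence $F = \tilde F$. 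The main obstacle is exactly this last step: the machinery borrowed from Theorem~\ref{thm:uniq} yields only $\mathcal T = \tilde{\mathcal T}$, that is, each $\sigma_\nu - \tilde\sigma_\nu$ is a polynomial of degree $< i_\nu$ by \eqref{findtau}. Killing this polynomial ambiguity is the genuine additional work, and it must use both the canonical form of $Q = \mathscr Q_n(\Sigma)$ (no lower-order antiderivative terms, so that $\hat Q$ is supported only on the top diagonals) and the initial condition $P(0) = I$; carefully combining the two-sided structure of \eqref{relP} with $P(0) = I$ should overdetermine the recursion enough to conclude $P \equiv I$ and therefore $\Sigma = \tilde\Sigma$.
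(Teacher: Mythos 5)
Your proposal is correct, but it finishes along a genuinely different route from the source. Note first that this paper does not reprove Proposition~\ref{prop:uniq}: it is imported from \cite{Bond21}, where the argument is analytic all the way through --- Birkhoff-type asymptotics and a Liouville argument show directly that $P(x,\la) \equiv I$, hence $\Phi \equiv \tilde\Phi$, hence $F + J = \Phi'\Phi^{-1} = \tilde F + J$ a.e., and $\Sigma = \tilde\Sigma$ then follows from the injectivity of $\Sigma \mapsto \mathscr S_n(\mathscr Q_n(\Sigma))$ that you also invoke. You instead stop the analytic input at the weaker statement borrowed from the proof of Theorem~\ref{thm:uniq} ($P(x,\la) = P(x)$, a constant unit triangular matrix satisfying \eqref{relP}) and compensate with the algebraic datum $P(0) = I$; this is a legitimate alternative, and the recursion you leave as ``expected'' does close --- in fact more cleanly than you fear. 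In the notation \eqref{eqr}--\eqref{bcr}, the canonical difference $\hat Q = \mathscr Q_n(\Sigma) - \mathscr Q_n(\tilde\Sigma)$ is supported on the diagonals $l + s = m + k$, $k = \overline{0,m-1}$, and $P(0) = I$ gives $r_{l,s}(0) = 0$; integrating \eqref{eqr} upward through the diagonals $l + s < m$ yields $r \equiv 0$ there, and then on each diagonal $m + k$ the two equations at the endpoint positions $(k,m)$ and $(m,k)$, where \eqref{bcr} forces $r = 0$, read by \eqref{defchi} as $\hat\sigma_{2k} + \hat\sigma_{2k+1} = 0$ and $\hat\sigma_{2k} - \hat\sigma_{2k+1} = 0$, whence $\hat\sigma_{2k} = \hat\sigma_{2k+1} = 0$; the interior equations $r'_{l,s} = 0$ with zero initial data then propagate the induction, giving $P \equiv I$ and $\Sigma = \tilde\Sigma$. (Without $P(0) = I$, the low diagonals of $r$ are polynomials with free integration constants, which reproduces exactly the polynomial ambiguity $\deg(\sigma_\nu - \tilde\sigma_\nu) < i_\nu$ that you correctly identified as the obstacle.) Comparatively: the route of \cite{Bond21} is shorter once the asymptotic machinery is in hand, whereas yours reuses the infrastructure of Theorem~\ref{thm:uniq} and makes transparent precisely how the hypothesis $L = I$ upgrades $\mathcal T = \tilde{\mathcal T}$ to $\Sigma = \tilde\Sigma$ --- a worthwhile clarification, though for full rigor you would still need the same bookkeeping the paper glosses over in passing from \eqref{relP} to \eqref{eqr} (the unit-diagonal entries of $P$ and the shift terms coming from the commutator with $J$), as well as the analogous treatment of odd $n$.
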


Note that, in Proposition~\ref{prop:uniq}, the fixed Mirzoev-Shkalikov construction of the associated matrices is assumed. Then, the antiderivatives $(\sigma_{\nu})_{\nu  = 0}^{n-2}$ are uniquely determined by the Weyl-Yurko matrix. Theorem~\ref{thm:uniq} corresponds to a different inverse problem. If the Weyl-Yurko matrix is known up to a factor $L \in \mathfrak L_n$, then it uniquely specifies the distribution coefficients $(\tau_{\nu})_{\nu = 0}^{n-2}$ independently of the choice of the associated matrix. These are two different results. It is worth mentioning that, in \cite{Bond21}, Proposition~\ref{prop:uniq} has been proved for a more general type of separated boundary conditions than \eqref{bcPhi}. 
The conditions \eqref{bcPhi} have the lowest possible orders, so they are the most simple ones. In other cases, boundary conditions contain constant coefficients, which either can be recovered or have to be given a priori (see \cite{Bond23-mmas}). For the general separated boundary conditions, Theorem~\ref{thm:uniq} does not hold (see the example in Subsection~\ref{sub:ex2}). 

Next, consider the inverse problem by the discrete spectral data, which was studied in \cite{Bond22}. Denote by $\Lambda$ the poles of the Weyl-Yurko matrix $M(\la)$. We will write $M \in W$ if all the poles of $M(\la)$ are simple. Then, the Laurent series has the form
$$
M(\la) = \frac{M_{\langle -1 \rangle}(\la_0)}{\la - \la_0} + M_{\langle 0 \rangle}(\la_0) + M_{\langle 1 \rangle}(\la_0)(\la - \la_0) + \dots, \quad \la_0 \in \Lambda.
$$
Define \textit{the weight matrices} as follows:
\begin{equation} \label{defN}
\mathcal N(\la_0) := \left( M_{\langle 0 \rangle}(\la_0) \right)^{-1} M_{\langle -1\rangle}(\la_0), \quad \la_0 \in \Lambda.
\end{equation}

In view of Theorem~\ref{thm:L}, the weight matrices are uniquely specified by the coefficients $\mathcal T$ and do not depend on the associated matrix $F \in \mathfrak F(\mathcal T)$. The inverse is also true:

\begin{thm} \label{thm:sd}
Suppose that $\mathcal T = (\tau_{\nu})_{\nu = 0}^{n-1}$ and $\tilde{\mathcal T} = (\tilde \tau_{\nu})_{\nu = 0}^{n-1}$ belong to $\mathfrak T_n^0$,
$M(\la)$ and $\tilde M(\la)$ are the Weyl-Yurko matrices defined by the associated matrices $F \in \mathfrak F(\mathcal T)$ and $\tilde F \in \mathfrak F(\tilde{\mathcal T})$, respectively, $M, \tilde M \in W$, and the corresponding spectral data sets $\{ \la_0, \mathcal N(\la_0) \}_{\la_0 \in \Lambda}$ and $\{ \la_0, \tilde{\mathcal N}(\la_0) \}_{\la_0 \in \tilde \Lambda}$ are equal to each other. Then $\mathcal T = \tilde{\mathcal T}$. Thus, the spectral data $\{ \la_0, \mathcal N(\la_0) \}_{\la_0 \in \Lambda}$ uniquely specify the coefficients $\mathcal T$ of the differential expression $\ell_n(y)$.
\end{thm}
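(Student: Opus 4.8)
The plan is to reduce Theorem~\ref{thm:sd} to Theorem~\ref{thm:uniq}: it suffices to show that the equality of the spectral data sets $\{ \la_0, \mathcal N(\la_0)\}_{\la_0 \in \Lambda}$ and $\{ \la_0, \tilde{\mathcal N}(\la_0)\}_{\la_0 \in \tilde\Lambda}$ forces $M(\la) = L \tilde M(\la)$ for some $L \in \mathfrak L_n$. Once this is established, Theorem~\ref{thm:uniq} immediately gives $\mathcal T = \tilde{\mathcal T}$. The remark preceding the theorem supplies the easy direction: the weight matrices are unchanged when $M$ is replaced by $L M$ with a constant invertible $L$, since such a factor cancels in \eqref{defN}. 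Here I need the converse implication, namely that the data determine $M$ up to a left factor from $\mathfrak L_n$.

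\textbf{Construction of an entire transfer matrix.} First I would form the meromorphic matrix function $\mathcal P(\la) := M(\la)\bigl(\tilde M(\la)\bigr)^{-1}$, whose only possible poles lie in $\Lambda \cup \tilde\Lambda = \Lambda$. The crucial step is to prove that $\mathcal P(\la)$ is in fact \emph{entire}. Fix $\la_0 \in \Lambda$; since $M, \tilde M \in W$, both matrices have simple poles there, and \eqref{defN} yields the Laurent identities $M_{\langle -1\rangle}(\la_0) = M_{\langle 0\rangle}(\la_0)\,\mathcal N(\la_0)$ and $\tilde M_{\langle -1\rangle}(\la_0) = \tilde M_{\langle 0\rangle}(\la_0)\,\tilde{\mathcal N}(\la_0)$. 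Using the hypothesis $\mathcal N(\la_0) = \tilde{\mathcal N}(\la_0)$, together with the strictly lower-triangular (hence nilpotent) structure of the residues $M_{\langle -1\rangle}$, $\tilde M_{\langle -1\rangle}$ and the fact that the poles of the entries $M_{j,k}$ sit in column $k$ (being eigenvalues of $\mathcal L_k$, as recalled from~\cite{Bond21}), I would expand $\mathcal P(\la)$ about $\la_0$ and verify that its principal part vanishes. This residue cancellation, carried out at every $\la_0 \in \Lambda$, is the technical heart of the argument.

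\textbf{Liouville step and structure of the limit.} Having shown that $\mathcal P(\la)$ is entire, I would control its growth via the Birkhoff-type asymptotics of the Weyl-Yurko matrix as $|\la| \to \iy$ in the sectors of the $\la$-plane (cf.~\cite{SS20, Bond21}). These give $\mathcal P(\la) = O(1)$, so by Liouville's theorem $\mathcal P(\la) \equiv \mathcal P$ is a constant matrix. Since $M(\la)$ and $\tilde M(\la)$ are unit lower-triangular, so is $\mathcal P = M \tilde M^{-1}$; a sectorwise comparison of the asymptotics of $M_{j,k}$ and $\tilde M_{j,k}$ then shows that $p_{j,k} = \de_{j,k}$ whenever $j \le n-m$ or $k > m$, i.e. $\mathcal P = L \in \mathfrak L_n$. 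Thus $M(\la) = L \tilde M(\la)$ with $L \in \mathfrak L_n$, and Theorem~\ref{thm:uniq} completes the proof.

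\textbf{Main obstacle.} The delicate point is the residue cancellation at the poles: one must pass from the invariant $\mathcal N(\la_0)$ to the full principal part of $\mathcal P(\la)$, which requires inverting $\tilde M(\la)$ near its poles and exploiting the precise rank and nilpotency structure of the residues under the simple-pole assumption $M, \tilde M \in W$. Everything else — the reduction to Theorem~\ref{thm:uniq} and the Liouville argument — is then routine given the asymptotic machinery already available from the regularization to the first-order system~\eqref{sysMS}.
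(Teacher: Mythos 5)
Your strategy is in essence the paper's own: the paper proves this theorem in two sentences, citing Lemma~9 of \cite{Bond22} for the fact that the matrix of spectral mappings $P(x,\la) = \Phi(x,\la)(\tilde\Phi(x,\la))^{-1}$ is entire in $\la$ once the spectral data coincide, and then repeating the argument of Theorem~\ref{thm:uniq}. Note that $C(0,\la) = I$ gives $\Phi(0,\la) = M(\la)$, so your transfer matrix $\mathcal P(\la) = M(\la)(\tilde M(\la))^{-1}$ is exactly $P(0,\la)$, and your ``entirety'' step is literally the same fact. Your residue-cancellation sketch identifies the right mechanism, but be aware that the two-term identity $M_{\langle -1\rangle}(\la_0) = M_{\langle 0\rangle}(\la_0)\,\mathcal N(\la_0)$ alone does not remove the full principal part: in the expansion of $M(\la)\bigl(I + \mathcal N(\la_0)/(\la-\la_0)\bigr)^{-1}$ the coefficient of $(\la-\la_0)^{-k}$ still contains the terms $M_{\langle j\rangle}(\la_0)\,\mathcal N(\la_0)^{k+j}$ with $j \ge 1$, so the finer rank/nilpotency relations of the weight matrices established in \cite{Bond22} are genuinely needed — this is precisely what Lemma~9 there encapsulates, and you correctly flag it as the technical heart.

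The one genuine divergence is your final step, and it is also the weak link. The paper does not invoke Theorem~\ref{thm:uniq} as a black box: it reruns that proof, in which the triangular structure of the constant matrix $P(x)$ (the counterpart of your requirement $\mathcal P \in \mathfrak L_n$) is extracted from the differential relation \eqref{relP} and the zero pattern of $F, \tilde F \in \mathfrak F_n$, not from asymptotics. Your route instead needs the a priori claim that the constant $\mathcal P$ lies in $\mathfrak L_n$ ``by sectorwise comparison of asymptotics''. Since the nontrivial entries $M_{j,k}(\la)$ grow like positive powers of $\rho$ (where $\la = \rho^n$), both the boundedness $\mathcal P(\la) = O(1)$ and the pinning $p_{j,k} = \de_{j,k}$ outside the lower-left block rest on second-order cancellations in the Birkhoff expansions that you assert but do not justify; already for $n = 2$ the boundedness of $m(\la) - \tilde m(\la)$ requires exactly such cancellation of the leading terms. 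The repair is cheap and brings you back to the paper's route: once $\mathcal P(\la)$ is entire, so is $P(x,\la) = C(x,\la)\,\mathcal P(\la)\,(\tilde C(x,\la))^{-1}$ for every $x$, and the whole proof of Theorem~\ref{thm:uniq} — constancy of $P(x,\la)$ in $\la$ via the estimates of Theorem~2 in \cite{Bond21}, then \eqref{relP}, the system \eqref{eqr}--\eqref{bcr}, and the bilinear-form computation \eqref{sm6} — applies verbatim without ever knowing $\mathcal P \in \mathfrak L_n$ in advance; that membership comes out a posteriori from Theorem~\ref{thm:L}.
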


For $n = 3$, Theorem~\ref{thm:sd} has been proved in \cite{Bond22}. For the general case, Theorem~\ref{thm:sd} is proved analogously to Theorem~\ref{thm:uniq}, since the matrix of spectral mappings $P(x, \la)$ is entire in $\la$ due to Lemma~9 in \cite{Bond22}.

\section{Example} \label{sec:ex}

In this section, as an example, we consider the case $n = 2$. 

\subsection{Regularization} \label{sub:ex1}

In this case, $m = 1$ and the differential expression \eqref{defln} takes the form 
\begin{equation} \label{defl2}
\ell_2(y) = y'' - (\tau_1(x) y)' - \tau_1(x) y' + \tau_0(x) y.
\end{equation}

Thus, $\mathcal T = (\tau_0, \tau_1)$, $i_0 = 1$, $i_1 = 0$,
the space $\mathfrak T_2$ is defined by the conditions $\tau_0 \in W_2^{-1}[0,1]$, $\tau_1 \in L_2[0,1]$, and $\Sigma = (\sigma_0, \sigma_1)$, $\tau_0 = -\sigma_0'$, $\sigma_1 = \tau_1$, so $\sigma_j \in L_2[0,1]$, $j = 0,1$. The antiderivative $\sigma_0$ of $-\tau_0$ can be chosen uniquely up to an additive constant.

Suppose that $y \in W_2^1[0,1]$ and $z \in \mathfrak D$. Calculations show that
\begin{align*}
& (y'', z) = -(y',z'), \\
& (- (\tau_1 y)' - \tau_1 y', z) = (\sigma_1 y, z') - (\sigma_1 y', z), \\
& (\tau_0 y, z) = (-(\sigma_0 y)' + \sigma_0 y', z) = (\sigma_0 y, z') + (\sigma_0 y', z).
\end{align*}
By summation, we arrive at the relation \eqref{quad}:
$$
(\ell_2(y), z)  = -(y', z') + (q_{0,0} y, z) + (q_{1,0} y', z) + (q_{0,1} y, z'), \quad y \in W_2^1[0,1], \quad z \in \mathfrak D, 
$$
where
$$
Q(x) = \begin{bmatrix}
        q_{0,0} & q_{0,1} \\
        q_{1,0} & 0
    \end{bmatrix} = \sigma_0 \chi_{0,1} + \sigma_1 \chi_{1,0}
    = \begin{bmatrix}
        0 & \sigma_0 + \sigma_1 \\
        \sigma_0 - \sigma_1 & 0
    \end{bmatrix}.
$$
(The matrices $\chi_{0,1}$ and $\chi_{1,0}$ are given by \eqref{chi2}).

Using formulas \eqref{Seven}, we obtain
$$
F(x) = \mathscr S_2(Q) = \begin{bmatrix}
                            q_{0,1} & 0 \\
                            -(q_{0,0} + q_{0,1} q_{1,0}) & -q_{1,0}
                        \end{bmatrix}
        = \begin{bmatrix}
            \sigma_1 + \sigma_0 & 0 \\
            \sigma_1^2 - \sigma_0^2 & \sigma_1 - \sigma_0
        \end{bmatrix}.
$$

The matrix-function $F(x)$ is the special case of the regularization matrix by Mirzoev and Shkalikov \cite{MS16}. It coincides with the associated matrices from \cite{Mir14} and \cite{VS15}. For $\mathcal T \in \mathfrak T_2^0$ (i.e. $\tau_1=0$), we obtain the differential expression $y'' + \tau_0 y$, $\tau_0 \in W_2^{-1}[0,1]$ and the associated matrix
\begin{equation} \label{regsi}
F(x) = \begin{bmatrix}
            \sigma_0 & 0 \\
            -\sigma_0^2 & -\sigma_0
        \end{bmatrix},
\end{equation}
which was widely used for investigation of direct and inverse Sturm-Liouville problems (see, e.g., \cite{SS03, HM-sd, HM-2sp}).

Now, proceed to the construction of the set $\mathfrak F(\mathcal T)$ of all the associated matrices for the differential expression \eqref{defl2} with $\mathcal T \in \mathfrak T_2$. Any matrix function $Q \in \mathfrak Q_2$ admits the representation \eqref{Qtau}:
\begin{equation} \label{findQ2}
Q(x) = \tau_{0,0} \begin{bmatrix}
                        1 & 0 \\
                        0 & 0
                    \end{bmatrix}
        + \tau_{0,1} \begin{bmatrix}
                        0 & 1 \\
                        1 & 0
                    \end{bmatrix} 
        + \tau_{1,0} \begin{bmatrix}
                        0 & 1 \\
                        -1 & 0
                    \end{bmatrix},
\end{equation}
where $\tau_{0,0} \in L_1[0,1]$ and $\tau_{0,1}, \tau_{1,0} \in L_2[0,1]$. The matrix $Q(x)$ is related to the differential expression $\ell_2(y)$ with the coefficients $\tau_0 = \tau_{0,0} - \tau_{0,1}'$, $\tau_1 = \tau_{1,0}$ (see \eqref{findtau}).

Suppose that $\tau_0 \in W_2^{-1}[0,1]$ and $\tau_1 \in L_2[0,1]$ are given.
Choose an arbitrary function $\tau_{0,0} \in L_1[0,1]$ and a constant $c_{0,0} \in \mathbb C$. Find
$$
\tau_{0,1} = \left( \tau_{0,0} - \tau_0 \right)^{(-1)} + c_{0,0}, \quad
\tau_{1,0} = \tau_1,
$$
construct $Q(x)$ by \eqref{findQ2} and $F(x) = \mathscr S_2(Q)$, which is the associated matrix for $\ell_2(y)$. By choosing different $\tau_{0,0} \in L_1[0,1]$ and $c_{0,0} \in \mathbb C$, we will obtain different associated matrices for the same differential expression. In particular, for $y'' + \tau_0 y$, $\tau_0 \in W_2^{-1}[0,1]$, all the associated matrices can be represented as
\begin{equation} \label{reg2}
F(x) = \begin{bmatrix}
            \sigma & 0 \\
            -\sigma^2 & -\sigma
        \end{bmatrix} + 
        \begin{bmatrix}
            0 & 0 \\
            r & 0
        \end{bmatrix},
\quad \sigma \in L_2[0,1], \: r \in L_1[0,1], \quad \tau_0 = -(\sigma' + r). 
\end{equation}
Clearly, $r \in L_1[0,1]$ can be chosen arbitrarily. After that, the function $\sigma = (\tau_0 - r)^{(-1)}$ is determined uniquely up to an additive constant. Thus, every $F \in \mathfrak T_2^0$ can be represented as the sum of the two associated matrices that are usually used for the Sturm-Liouville expressions $y'' - \tau(x) y$, $\tau = \sigma' \in W_2^1[0,1]$, and $y'' - r(x) y$, $r \in L_1[0,1]$.

\subsection{Inverse problems} \label{sub:ex2}

Proceed to inverse spectral problems for the Sturm-Liouville equation
\begin{equation} \label{StL}
y'' - q(x) y = \la y, \quad x \in (0,1), \quad q = \sigma' \in W_2^{-1}[0,1].
\end{equation}
Let us formulate some known uniqueness results and compare them to the results of Section~\ref{sec:ip}. 

The associated matrix \eqref{regsi} ($\sigma_0 = \sigma$) produces the quasi-derivative $y^{[1]} = y' - \sigma y$. Following the classical inverse problem theory (see, e.g., \cite{FY01}), introduce the main spectral characteristics. Let $\{ \la_n \}_{n \ge 1}$ and $\{ \mu_n \}_{n \ge 1}$ be the eigenvalues of the boundary value problems for equation \eqref{StL} with the boundary conditions $y(0) = y(1) = 0$ and $y^{[1]}(0) = y(1) = 0$, respectively. Denote by $S(x, \la)$ and $C(x, \la)$ the solutions of equation \eqref{StL} satisfying the initial conditions
$$
S(0, \la) = C^{[1]}(0,\la) = 0, \quad S^{[1]}(0,\la) = C(0,\la) = 1.
$$

Obviously, the eigenvalues $\{ \la_n \}_{n \ge 1}$ and $\{ \mu_n \}_{n \ge 1}$ coincide with the zeros of the characteristic functions $S(1, \la)$ and $C(1,\la)$, respectively. In the case of simple eigenvalues $\{ \la_n \}_{n \ge 1}$, introduce \textit{the weight numbers} $\al_n := \int_0^1 y_n^2(x) \, dx$, $n \ge 1$, where $y_n(x) = S(x, \la_n)$ are the corresponding eigenfunctions. (In the case of multiple eigenvalues, one can use the generalized weight numbers as in \cite{But07, BSY13}). Furthermore, define \textit{the Weyl function} $m(\la) := -\dfrac{C(1,\la)}{S(1,\la)}$, which is meromorphic in the $\la$-plane.

In the case of regular potential $q \in L_1[0,1]$, each of the following three types of the spectral data uniquely specifies $q$:

\medskip

(i) the two spectra $\{ \la_n, \mu_n \}_{n \ge 1}$; 

\smallskip

(ii) the eigenvalues $\{ \la_n \}_{n \ge 1}$ and the weight numbers $\{ \al_n \}_{n \ge 1}$ (if the eigenvalues are simple);

\smallskip

(iii) the Weyl function $m(\la)$.

\medskip

Moreover, the spectral data (i)--(iii) uniquely determine each other. In the case of distribution potential $q \in W_2^{-1}[0,1]$, the situation is slightly different:

\begin{enumerate}
\item The two spectra $\{ \la_n, \mu_n \}_{n \ge 1}$ and the Weyl function $m(\la)$ uniquely specify each other. Indeed, on the one hand, $\{ \la_n \}_{n \ge 1}$ and $\{ \mu_n \}_{n \ge 1}$ coincide with the poles and the zeros of $m(\la)$, respectively. On the other hand, the characteristic functions $S(1,\la)$ and $C(1,\la)$ can be constructed as infinite produces by their zeros, and so $m(\la)$ can be found.
\item The weight numbers $\{ \al_n \}_{n \ge 1}$ are uniquely specified by $m(\la)$: $\al_n^{-1} = \Res_{\la = \la_n} m(\la)$, while $m(\la)$ is uniquely determined by $\{ \la_n, \al_n \}_{n \ge 1}$ up to an additive constant (see \cite{Bond21-mn}).
\end{enumerate}

From the inverse problem viewpoint, we have the following uniqueness results (see \cite{HM-sd, HM-2sp}):

\begin{enumerate}
\item $\{ \la_n, \mu_n \}_{n \ge 1}$ or $m(\la)$ uniquely specify $\sigma(x)$.
\item $\{ \la_n, \al_n \}_{n \ge 1}$ uniquely specify $q(x)$ or $\sigma(x) + c$, where $c$ is an arbitrary constant.
\end{enumerate}

The first result corresponds to Proposition~\ref{prop:uniq} and the second one, to Theorem~\ref{thm:sd}. Indeed, due to Section~\ref{sec:ip}, the Weyl-Yurko matrix for equation \eqref{StL} has the form
$$
M(\la) = \begin{bmatrix}
            1 & 0 \\
            m(\la) & 1
        \end{bmatrix},
$$
where $m(\la)$ is the Weyl function. Note that the definition of $m(\la)$ is strongly connected with the regularization matrix \eqref{regsi} ($\sigma_0 = \sigma$), while the antiderivative $\sigma(x)$ of $q(x)$ is defined up to a constant $c$. Thus, $m(\la)$ depends on $c$. Consequently, one can uniquely recover $\sigma(x)$ from $m(\la)$ as in Proposition~\ref{prop:uniq}. Anyway, we can consider other associated matrices generated by \eqref{reg2}. By virtue of Theorem~\ref{thm:L}, the Weyl-Yurko matrices $M(\la)$ and $\tilde M(\la)$, which are obtained from different associated matrices $F(x)$ and $\tilde F(x)$ in the second-order case, are related as follows:
$$
\begin{bmatrix}
            1 & 0 \\
            m(\la) & 1
        \end{bmatrix}
= 
\begin{bmatrix}
1 & 0 \\
l_{2,1} & 1
\end{bmatrix}
\begin{bmatrix}
            1 & 0 \\
            \tilde m(\la) & 1
        \end{bmatrix} \quad \Leftrightarrow \quad
m(\la) = l_{2,1} + \tilde m(\la).
$$

Thus, the assumption that $M(\la)$ is given up to a factor $L \in \mathfrak L_2$ actually means that the Weyl function $m(\la)$ is given up to an additive constant $c$. Then, by using the given Weyl function, we cannot uniquely determine $\sigma(x)$ but, by virtue of Theorem~\ref{thm:uniq}, can uniquely determine $q(x)$. This corresponds to the inverse problem by the spectral data $\{ \la_0, \mathcal N(\la_0) \}_{\la_0 \in \Lambda}$, which in the second-order case has the form
$$
\Lambda = \{ \la_n \}_{n \ge 1}, \quad \mathcal N(\la_n) = \begin{bmatrix}
                                                                0 & 0 \\
                                                                \al_n^{-1} & 0 
                                                          \end{bmatrix}, \: n \ge 1.
$$
Hence, the spectral data $\{ \la_n, \al_n \}_{n \ge 1}$ do not depend on the choice of the associated matrix. Theorem~\ref{thm:sd} imply that $\{ \la_n, \al_n \}_{n \ge 1}$ uniquely specify $q(x)$ but, obviously, it cannot uniquely specify $\sigma(x)$.

The situation changes for different types of boundary conditions. In particular, the spectral data of the Sturm-Liouville equation \eqref{StL} with the Robin-type boundary conditions
$$
y^{[1]}(0) - h y(0) = 0, \quad y^{[1]}(1) + H y(1) = 0, \quad h, H \in \mathbb C,
$$
are invariant with respect to the shift $\sigma := \sigma + c$, $h := h - c$, $H := H + c$, $c \in \mathbb C$. Therefore, it is natural to fix $h = 0$. Then, the corresponding three types of spectral data (two spectra, eigenvalues and weight numbers, and the Weyl function) uniquely determine each other as well as the coefficients $\sigma(x)$ and $H$ (see \cite{HM-sd, HM-2sp}). However, the other types of inverse problems, which consist in determining $q(x)$ (but not $\sigma(x)$) and correspond to Theorems~\ref{thm:uniq} and~\ref{thm:sd}, cannot be considered for the Robin-type boundary conditions, because the associated matrix (roughly speaking, $\sigma(x)$) is related to the coefficients $h$ and $H$.

Thus, the described examples for $n = 2$ show that the both types of inverse problems, that is, the recovery of $\mathcal T$ and of $\Sigma$, generalize the classical problem statements. But, for distribution coefficients, these two types are different and the both are worth being studied.

\section{Conclusion} \label{sec:concl}

In this paper, for each $n \ge 2$, we have considered the class $\mathfrak F_n$, which contains the matrices of Mirzoev and Shkalikov \cite{MS16, MS19} associated with the differential expressions $\ell_n(y)$. We have shown that, every matrix function $F \in \mathfrak F_n$ is associated with some differential expression $\ell_n(y)$ with coefficients $\mathcal T = (\tau_{\nu})_{\nu = 0}^{n-1} \in \mathfrak T_n$. Furthermore, we have constructively described the family $\mathfrak F(\mathcal T) \subset \mathfrak F_n$ of all the associated matrices for fixed $\mathcal T$. In addition, we have proved that $\mathcal D_F = \mathcal D_{\tilde F}$ for $F, \tilde F \in \mathfrak F(\mathcal T)$. The both even and odd order cases have been studied. Moreover, we applied this construction to the inverse problem theory. The uniqueness theorems have been proved for inverse spectral problems of a new type.

Our results have the following \textbf{advantages} over the previous studies:

\begin{enumerate}
\item We have investigated various matrices associated with the same differential expression, while the previous studies provide only specific constructions of associated matrices.
\item We have studied a novel class of inverse spectral problems which consist in the recovery of distributional coefficients $\mathcal T$ independently of the associated matrix. In the previous works for higher-order differential operators with distribution coefficients, spectral data were connected with a fixed associated matrix. However, the both types of inverse problems generalize the classical problem statements and are worth being investigated.
\end{enumerate}

In the future, our results can be applied to studying various spectral theory issues for differential operators with distribution coefficients, because, for investigation of different spectral properties, it can be convenient to use different associated matrices. In particular, the results of this paper can be used for obtaining solvability and stability conditions for inverse problems.

\medskip

{\bf Funding.} This work was supported by Grant 21-71-10001 of the Russian Science Foundation, https://rscf.ru/en/project/21-71-10001/ 

\medskip





\noindent Natalia Pavlovna Bondarenko \\

\noindent 1. Department of Mechanics and Mathematics, Saratov State University, \\
Astrakhanskaya 83, Saratov 410012, Russia, \\

\noindent 2. Department of Applied Mathematics and Physics, Samara National Research University, \\
Moskovskoye Shosse 34, Samara 443086, Russia, \\

\noindent 3. Peoples' Friendship University of Russia (RUDN University), \\
6 Miklukho-Maklaya Street, Moscow, 117198, Russia, \\

\noindent e-mail: {\it bondarenkonp@info.sgu.ru}

\end{document}